\documentclass[11pt,reqno]{amsart}

\usepackage{cases}
\usepackage{times}
\usepackage{graphicx}
\usepackage{amssymb}
\usepackage{mathrsfs}
\usepackage{ulem}
\usepackage{xcolor}

\newtheorem{theorem}{Theorem}[section]

\newtheorem{proposition}[theorem]{Proposition}
\theoremstyle{definition}

\newtheorem{remark}[theorem]{Remark}

\numberwithin{equation}{section}
\newcommand{\beqm}{\begin{eqnarray*}}
\newcommand{\eqm}{\end{eqnarray*}}

\newcommand{\Sn}{\mathbb{S}_{n}}

\newcommand{\Cn}{\mathbb{C}^n}

\newcommand{\qp}{\mathcal{Q}_p}

\newcommand{\D}{\mathbb{D}}
\newcommand{\B}{\mathbb{B}_n}

\DeclareMathOperator{\dist}{dist}

\begin{document}

\title[Tent embedding for Besov spaces and integral operators]{Carleson measures, tent embeddings, and Volterra-type integral operators on the unit ball}

\author[X. Lv]{Xiaofen Lv}
\address{\noindent Xiaofen Lv \\Department of Mathematics\\ Huzhou Normal University, Huzhou
313000,   China } \email{lvxf@zjhu.edu.cn}

\author[X. Tang]{Xiaomin Tang}
\address{Xiaomin Tang \\Department of Mathematics\\ Huzhou Normal University, Huzhou
313000,   China } \email{txm@zjhu.edu.cn}

\author[J. A. Virtanen]{Jani A. Virtanen}
\address{Jani A. Virtanen, University of Eastern Finland, Joensuu, Finland\newline
\indent University of Reading, Reading, United Kingdom\newline \indent University of Helsinki, Helsinki, Finland}

\thanks {X. Lv is supported by  NNSF of China
(Grant No. 12431005, 12171150)  and  NNSF of
Zhejiang Province (Grant No. LMS26A010001). X. Tang is supported by
NNSF of Zhejiang Province  (Grant No.  LZ24A010004). J. Virtanen was supported in part by the Engineering and Physical Sciences Research Council (EPSRC) grant EP/Y008375/1 and the Research Council of Finland grant no.~359563.}

\keywords{Besov space, tent space, Carleson measure, integral operator.}

\subjclass[2010]{Primary 47B38; Secondary 32A37.}

\begin{abstract}
In this paper, we establish a sharp comparison between Carleson-cube and Bergman-metric-ball conditions on the open unit ball $\B$ and combine it with a Berezin-type characterization to prove embedding theorems for Besov spaces and Bergman spaces on $\B$ into logarithmic tent spaces in the Bergman metric. As applications, we characterize the boundedness, compactness, and essential norms of the Volterra-type integral operators $T_g$ and $I_g$ acting from the Besov space $B_t(\B)$ to the general function space $F(p,q,s)$.
\end{abstract}

\maketitle

\section{Introduction}\label{Sect1}

Let  $dv$  denote the normalized volume measure on $\B$, and let  $H(\B)$ denote the set of all holomorphic functions on $\B$.  For $0<p<\infty$ and $\alpha>-1$, the weighted Bergman space  $A^p_\alpha(\B)$  consists of functions  $f\in H(\B)$  satisfying
$$
\|f\|_{A^p_\alpha}=\left(\int_{\B}|f(z)|^p dv_\alpha(z)\right)^{1/p}<\infty,
$$
where
$dv_\alpha(z)=c_\alpha(1-|z|^2)^\alpha dv(z)$, and
$c_\alpha$ is a positive normalizing constant such that $v_\alpha(\B)=1$.
 When $\alpha=0$, we simply write $A^p(\B)$ for $A^p_\alpha(\B)$.

The M\"{o}bius invariant measure $d\tau$ is defined by
$$
d\tau(z)=\frac{dv(z)}{(1-|z|^2)^{n+1}}.
$$
For $0< p<\infty$,  the Besov space $B_p(\B)$ is the space of all  $f\in H(\B)$ such that
\begin{equation}\label{Besov-1}
(1-|z|^2)^{N}\frac{\partial^m f}{\partial z^m}(z)\in L^p(\B, d\tau),\qquad |m|=N,
 \end{equation}
 for some positive integer $N>n/p$, where $m=(m_1, \cdots, m_n)$ is the multi-index, $|m|=m_1+ \cdots +m_n$.
 By \cite[Theorem 6.1]{Zh04}, $f\in B_p(\B)$  if and only if  \eqref{Besov-1} holds for every  positive integer $N>n/p$.
With the norm
$$
\|f\|_{B_p}=\sum_{|m|\leq N-1}\left|\frac{\partial^m f}{\partial z^m}(0)\right|+\sum_{|m|=N}\left(\int_{\B}\left|(1-|z|^2)^N\frac{\partial^m f}{\partial z^m}(z)\right|^pd\tau(z)\right)^{1/p}
$$
the space $B_p(\B)$ is a Banach space for $p\geq 1$. Furthermore, the polynomials are dense in  $B_p(\B)$. It is well known that
 $$
 B_{p_1}(\B)\subseteq B_{p_2}(\B)
 $$
 whenever $0<p_1\leq p_2\leq \infty$, where $B_\infty$ denotes the Bloch space $\mathcal{B}$, which plays an important role in the classical geometric function theory. Recall that, $f$ belongs to Bloch space $\mathcal{B}$ if  $f\in H(\B)$ and
 $$
 \sup\limits_{z\in \B}|\widetilde{\nabla}f(z)|<\infty,
 $$
where $\widetilde{\nabla}f(z)
=\nabla(f\circ\varphi_z)(0)$ and
$\nabla f=\left(\frac{\partial f}{\partial z_1}, \cdots, \frac{\partial f}{\partial z_n}\right)$
is the complex gradient of $f$.   When $n<p<\infty$, the Besov space $B_p(\B)$ can be equipped with the following norm
\begin{equation}\label{Besov-a1}
|f(0)|+\left(\int_{\B}(1-|z|^2)^p\left|\nabla f(z)\right|^pd\tau(z)\right)^{1/p}.
\end{equation}
Especially, on the unit disk $\D$, for $p>1$, $B_p(\D)$ consists of all $f\in H(\D)$ such that
$$
 \left(\int_{\D}(1-|z|^2)^{p-2}| f'(z)|^pdA(z)\right)^{1/p} < \infty,
$$
which was introduced by Zhu in \cite{Zhu91}. More information on Besov spaces can be found  in \cite[Chapter 6]{Zh04}.

Notice that each $B_p$  is  M\"{o}bius invariant for $0<p\le \infty$ and $B_1$ is the minimal  M\"{o}bius invariant space, which was first studied by Arazy~\cite{AF} on the unit disk and later by Peloso \cite{Pe92} on the unit ball. The space $B_2$ plays the role of the Dirichlet space. The uniqueness of $B_2$ among  M\"{o}bius invariant Hilbert spaces was first proved by  Arazy and Fisher~\cite{AF85} for the unit disk, and then generalized to the unit ball by Zhu in \cite{Zh91}. It is also known that the Bloch space is the maximal M\"{o}bius invariant Banach space.

In order to state our applications to integral operators, we also recall the general function space $F(p,q,s)$. For $0<p<\infty$, $-n-1<q<\infty$, $0\le s<\infty$, and $s+q>-1$, let $F(p,q,s)$ denote the space of all $f\in H(\B)$ such that
$$
\|f\|^p_{F(p,q,s)}
=
|f(0)|^p+\sup_{z\in\B}\int_{\B}|\nabla f(w)|^p(1-|w|^2)^qG^s(z,w)\,dv(w)<\infty.
$$
This space was introduced by Zhao on the unit disk in \cite{Zhao96} and has since been studied extensively on both the unit disk and the unit ball. The space $F(p,q,s)$ is often referred to as a general function space because it includes many classical function spaces as special cases for suitable choices of the parameters $p$, $q$, and $s$, such as $\mathrm{BMOA}$, $\mathcal Q_p$, Bergman spaces, Hardy spaces, Bloch spaces, and Dirichlet-type spaces; see \cite{PR06} for further details. For example, $F(2,2,s)$ is just the $\mathcal Q_s$ space; see~\cite{Oy98, Ya98}.

A central technical contribution of this paper is an equivalence between the Carleson-cube condition of Yang \cite{Ya98} and the Bergman-metric-ball condition on $\B$, established
here in a range that goes beyond the standard Bergman Carleson framework. In particular, Proposition~\ref{pro1} shows that for every $\gamma>1$,
$$
    \sup_{\zeta\in\Sn,\,r\in(0,1)}\frac{\mu(Q_r(\zeta))}{r^{n\gamma}}<\infty \iff \sup_{a\in\B}\frac{\mu(D(a,R))}{(1-|a|^2)^{n\gamma}}<\infty
$$
and that the same equivalence holds for the corresponding vanishing
conditions on the boundary. By contrast, Remark~\ref{remark2.4} shows that this equivalence fails in general for $(n-1)/n<\gamma\le 1$. Together, these results provide the key geometric tool that yields a Bergman-metric-ball characterization of $F(p,q,s)$ for $s>1$, which in turn underlies our analysis of the Volterra-type integral operators $T_g$ and $I_g$ in Section~\ref{section 3}.

For $g\in H(\B)$ and $z\in\B$, these integration operators are defined on $H(\B)$ by
$$
T_g f(z)=\int_0^1 f(tz)\,\Re g(tz)\,\frac{dt}{t}, \qquad
I_g f(z)=\int_0^1 g(tz)\,\Re f(tz)\,\frac{dt}{t},
$$
where $\Re g(z)=\sum_{j=1}^{n}z_j\,\partial g/\partial z_j(z)$ is the radial derivative of $g$. The operator $T_g$ is a generalization of the Ces\`{a}ro operator, and it was introduced on the unit disk by Pommerenke \cite{Po} and on the unit ball by Hu \cite{Hu}. More recently, Miihkinen, Pau, Per\"al\"a, and Wang \cite{MPPW} characterized the boundedness of $T_g$ from the weighted Bergman spaces $A^p_\alpha$ to the Hardy spaces $H^q$ on $\B$ for the full range $0<p,q<\infty$. We consider instead the boundedness, compactness, and essential norms of $T_g$ from Besov spaces to the general function space $F(p,q,s)$, and we also treat the companion operator $I_g$, which was not addressed in \cite{MPPW} and requires a separate analysis. The change of source and target spaces leads to a different approach: whereas \cite{MPPW} relies on Calder\'{o}n's area theorem and Coifman--Meyer--Stein cone tent spaces, our analysis proceeds via logarithmic tent spaces in the Bergman metric, the Carleson-cube/Bergman-ball comparison of Proposition~\ref{pro1}, and the Berezin-type characterization of \cite{Ya98}.

The remainder of the paper is organized as follows. Section~\ref{section 2} develops the Carleson-cube/Bergman-ball comparison and related results; Section~\ref{embedding} establishes the tent embedding theorems for Besov and Bergman spaces; and Section~\ref{section 3} contains the applications to the Volterra-type integral operators $T_g$ and $I_g$.

Throughout the paper, $A(f) \lesssim B(f)$ means that there exists a positive constant $C$, independent of $f$, such that $A(f)\leq CB(f)$. We write $A \asymp B$ if both $A \lesssim B$ and $B\lesssim A$.

\section{Characterizations of Carleson measures}\label{section 2}

Recall that the space $L^p(\B, d\mu)$  consists of all $\mu$-measurable functions $f$ on $\B$ satisfying
$$
\int_{\B}|f(z)|^p d\mu(z)<\infty,
$$
where $\mu$ is  a positive Borel measure on   $\B$ and $0<p<\infty$.

In this section we are interested in positive  measures $\mu$ with the property that $L^q(\B, d\mu)$ contains the Bergman  space. Such  measures are termed  Carleson measures, which play a significant role in the theory of function spaces. Carleson measures were first introduced by Carleson \cite{Car} to characterize the interpolating sequences in the algebra $H^\infty$
 of bounded analytic functions.
Theorem \ref{TH0} and Theorem \ref{TH0a} show that Carleson measures can be characterized by the Carleson cube or the Bergman metric ball;  Proposition \ref{pro1} and Remark~\ref{remark2.4} show the relationship between the Carleson cube and the Bergman metric ball.

For $\zeta\in\Sn=\partial\B$ and $r\in(0, 1)$, the Carleson cube $Q_r(\zeta)$ is defined by
\[Q_r(\zeta)=\left\{z\in\B: |1-\langle z, \zeta\rangle|<r\right\}.\]
It is well known that
$$v(Q_r(\zeta))\asymp r^{n+1}.$$
 The Bergman metric  on $\B$ is
$$
\beta(z, w)=\frac{1}{2}\log\frac{1+|\varphi_z(w)|}{1-|\varphi_z(w)|}, \qquad z, w\in\B,
$$
 where $\varphi_z$ is the involutive automorphism of $\B$ that interchanges $0$ and $z$.
For $R>0$ and $z\in\B$, define the Bergman metric ball to be
$$D(z, R)=\left\{w\in \B: \ \beta(z, w)<R \right\}.$$
 It is well known that,  for any fixed $R$
\begin{equation}\label{Bergman-1}
\left|1-\langle z, w\rangle\right|\asymp 1-|w|^2\asymp 1-|z|^2,   \qquad w\in D(z, R),
\end{equation}
and
$$
v(D(z, R))\asymp (1-|z|^2)^{n+1},\qquad z\in \B.
$$
 See more information in \cite{Zh04}.

For  $0<p, q<\infty$, $\alpha>-1$, a positive Borel measure $\mu$ on $\B$ is  called to be a $q/p$-Carleson measure for $A^p_\alpha (\B)$ if the embedding operator
$i:  A^p_\alpha (\B)\rightarrow L^q (\B, d\mu)$ is bounded. We will use $\|i\|_{A^p_\alpha(\B) \rightarrow L^q (\B, d\mu)}$ to denote
the operator norm. We say that $\mu$ is a vanishing $q/p$-Carleson measure for $A^p_\alpha (\B)$ if
$$\lim\limits_{j\rightarrow\infty}\int_{\B}\left|f_j(z)\right|^q d\mu(z)=0$$
whenever $\{f_j\}$ is  norm-bounded   in $ A^p_\alpha (\B)$ and
converges to $0$ uniformly on  any compact subset of $\B$ as
$j\to \infty$.

 We state the following result concerning   Carleson  measures, which have been studied by several authors,
including Hastings \cite{Ha75} and Luecking  \cite{Lu83} for $\D$;
Cima and Wogen \cite{CW}, Zhu \cite{Zh04} and Duren and Weir \cite{DW} for $\B$;
Zhu \cite{Zhu88} for bounded symmetric domains; Abate and Saracco \cite{AS}, Abate, Raissy
and Saracco \cite{ARS}, Hu, Lv and Zhu \cite{HLZ16}, Abate and Raissy \cite{AR} and Abate,  Mongodi and   Raissy \cite{AMR} for strongly pseudoconvex domains.

\begin{theorem}\label{TH0}
Suppose $0<p\leq q<\infty$, and $\mu$ is a positive Borel measure on $\B$.
Then  the following assertions are equivalent:
\begin{enumerate}
\item[(1)] $\mu$ is a $q/p$-Carleson measure for $A^p (\B)$;

\item[(2)] for some (or all) $R>0$,
$$\sup_{a\in\B}\frac{\mu(D(a, R))}
{(1-|a|^2)^{(n+1)q/p}}<\infty;$$

 \item[(3)] $\mu$ satisfies
\begin{equation}\label{car-tube1}
\sup_{\zeta\in \Sn, \, r\in(0,1)}\frac{\mu(Q_r(\zeta))}
{r^{(n+1)q/p}}<\infty.
\end{equation}
Moreover,
$$
\|i\|^{q}_{A^p(\B)\rightarrow L^{q}(\B, d\mu)}\asymp\sup_{a\in\B}\frac{\mu(D(a, R))}
{(1-|a|^2)^{(n+1)q/p}}
\asymp\sup_{\zeta\in \Sn, \, r\in(0,1)}\frac{\mu(Q_r(\zeta))}
{r^{(n+1)q/p}}.
$$
\end{enumerate}
\end{theorem}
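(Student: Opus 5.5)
I will prove the cycle of implications (1)$\Rightarrow$(3)$\Rightarrow$(2)$\Rightarrow$(1), keeping track of the constants so that the norm equivalence follows. Throughout I fix $R>0$ and write $\lambda=q/p\ge 1$.

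\emph{(1)$\Rightarrow$(3).} For $\zeta\in\Sn$ and $0<r<1$ I put $a=(1-r)\zeta$ and test the embedding on
$$f_a(z)=\frac{(1-|a|^2)^{(n+1)/p}}{(1-\langle z,a\rangle)^{2(n+1)/p}}.$$
Standard integral estimates on $\B$ give $\|f_a\|_{A^p}\asymp 1$. For $z\in Q_r(\zeta)$ I will check that $|1-\langle z,a\rangle|\lesssim r$. Hence $|f_a(z)|^q\gtrsim r^{-(n+1)q/p}$ on $Q_r(\zeta)$, and the inequality $\int|f_a|^q\,d\mu\le \|i\|^q\|f_a\|^q_{A^p}$ gives
$$\mu(Q_r(\zeta))\lesssim \|i\|^q r^{(n+1)q/p}.$$
For $r$ bounded below (say $r\ge 1/2$) I will instead test with $f\equiv 1$, which gives $\mu(\B)\le\|i\|^q$.

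\emph{(3)$\Rightarrow$(2).} For $a\in\B$ with $|a|$ close to $1$, let $\zeta=a/|a|$. By \eqref{Bergman-1} and the triangle inequality for the quasi-metric $d(z,w)=|1-\langle z,w\rangle|^{1/2}$ on $\overline{\B}$, I get $D(a,R)\subset Q_{Cr}(\zeta)$ with $r=1-|a|^2$ and $C=C(R)$. Therefore
$$\mu(D(a,R))\le \mu(Q_{Cr}(\zeta))\lesssim r^{(n+1)q/p}.$$
For $|a|\le r_0$ the ball $D(a,R)$ lies in a fixed compact set, so its measure is at most $\mu(\B)\lesssim 1$, which is controlled by the cube constant with $r$ near $1$.

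\emph{(2)$\Rightarrow$(1).} This is the main step, and it is the only place where $q\ge p$ is used. I start from subharmonicity: $|f(z)|^p\lesssim (1-|z|^2)^{-(n+1)}\int_{D(z,R)}|f|^p\,dv$. Let $\{a_k\}$ be an $R$-lattice, so that the balls $D(a_k,2R)$ have bounded overlap $N$. Then
$$\int_\B |f|^q\,d\mu\le\sum_k \mu(D(a_k,R))\sup_{D(a_k,R)}|f|^q\lesssim M\sum_k (1-|a_k|^2)^{(n+1)q/p}\Big((1-|a_k|^2)^{-(n+1)}\int_{D(a_k,2R)}|f|^p\,dv\Big)^{q/p},$$
where $M$ denotes the supremum in (2). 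The powers of $1-|a_k|^2$ cancel, leaving
$$\int_\B |f|^q\,d\mu\lesssim M\sum_k\Big(\int_{D(a_k,2R)}|f|^p\,dv\Big)^{q/p}\le M\Big(\sum_k\int_{D(a_k,2R)}|f|^p\,dv\Big)^{q/p}\le M N^{q/p}\|f\|_{A^p}^q.$$
The second inequality uses $q/p\ge1$, and the last uses the bounded overlap. Thus $\|i\|^q\lesssim M$.

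The remaining point is the ``some or all $R$'' clause, which I get by covering: each $D(a,R')$ is covered by a bounded number of balls $D(b,R)$ with $b\in D(a,R')$, and $1-|b|^2\asymp 1-|a|^2$ there. Collecting the constants from the three implications yields the stated norm equivalence. I expect the main technical care to lie in the geometric inclusion of (3)$\Rightarrow$(2) and in the lattice and overlap bookkeeping of (2)$\Rightarrow$(1); both are standard, see \cite{Zh04}.
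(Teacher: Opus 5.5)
Your cycle (1)$\Rightarrow$(3)$\Rightarrow$(2)$\Rightarrow$(1) is correct except at one step. That step cannot be repaired, because the statement as written is slightly false there. In the small-$|a|$ case of (3)$\Rightarrow$(2) you claim that $\mu(\B)$ is controlled by the cube constant with $r$ near $1$. But $|1-\langle 0,\zeta\rangle|=1$, so the origin lies in no $Q_r(\zeta)$ with $r\in(0,1)$.

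Take $\mu=\delta_0$. The supremum in (3) is $0$. On the other hand, $\|i\|_{A^p(\B)\to L^q(\B,d\mu)}=1$, since $|f(0)|\le\|f\|_{A^p}$ with equality for $f\equiv1$, and the supremum in (2) is positive. Hence the bound ``ball supremum $\lesssim$ cube supremum'' that this step is meant to deliver is false, and so is the last $\asymp$ in the theorem.

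What your argument does give is the following. Choose $\zeta_1,\dots,\zeta_N\in\Sn$ so that every $u\in\Sn$ has $\operatorname{Re}\langle u,\zeta_j\rangle\ge 1/2$ for some $j$. For $0<|z|<1$, take that $j$ for $u=z/|z|$. Then $|1-\langle z,\zeta_j\rangle|^2\le 1-|z|+|z|^2<1$, so $z\in Q_r(\zeta_j)$ for all $r$ close to $1$. Letting $r\uparrow1$ yields $\mu(\B\setminus\{0\})\le N\sup_{\zeta,r}\mu(Q_r(\zeta))/r^{(n+1)q/p}$.

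So your proof goes through in either of two repaired forms:
\begin{itemize}
\item replace the cube quantity by $\mu(\{0\})+\sup_{\zeta,r}\mu(Q_r(\zeta))/r^{(n+1)q/p}$;
\item or let $r$ range over $(0,2]$ as in \cite{Zh04}. Then $Q_2(\zeta)=\B$, and your test with $f\equiv1$ is exactly what covers $r\ge1$ in (1)$\Rightarrow$(3).
\end{itemize}
The qualitative implication (3)$\Rightarrow$(2) holds whenever $\mu(\{0\})<\infty$. The paper's proof has the same blind spot, hidden in its appeal to \cite[Theorem~1]{Ya98}: the integral in \eqref{berez} at $z=0$ equals $\mu(\B)$, which the cube condition with $r<1$ cannot detect.

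Everything else in your proposal is right, and it is a genuinely different, self-contained route. The paper argues only by citation. It takes (1)$\Leftrightarrow$(2) from \cite[Theorem~3.1]{HLZ16}, rewrites (2) as the Berezin-type condition \eqref{berez}, and gets \eqref{berez}$\Leftrightarrow$(3) from Yang's theorem. You prove each link by hand:
\begin{itemize}
\item Your test function already works for every $r\in(0,1)$, since $|1-\langle z,a\rangle|<2r$ on $Q_r(\zeta)$ and $1-|a|^2=r(2-r)$.
\item The inclusion $D(a,R)\subset Q_{C(1-|a|^2)}(a/|a|)$ is the one the paper itself uses in the proof of Proposition~\ref{pro1}(2).
\item The lattice argument makes transparent that $q\ge p$ enters only through $\sum_k x_k^{q/p}\le(\sum_k x_k)^{q/p}$.
\end{itemize}
The trade-off is that your passage from balls to cubes goes through the embedding, (2)$\Rightarrow$(1)$\Rightarrow$(3). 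That is precisely what is unavailable for the exponent $n\gamma$ with $1<\gamma<(n+1)/n$ in Proposition~\ref{pro1}, which is why the paper needs the Berezin-transform route there.
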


\begin{proof}
Notice that the Bergman kernel in $A^2(\B)$ is $K(z,w)=\frac{1}{(1-\langle z, w\rangle)^{n+1}}$.
Taking $\alpha=0,\, t=2q/p$ in \cite[Theorem 3.1]{HLZ16}, $A^p(\B)$ is continuously contained in
$L^q(\B, d\mu)$ for $p\leq q$ if and only if condition (2) holds. Moreover, condition (2) is equivalent to
\begin{equation}\label{berez}
\sup_{z\in \B}\int_{\B}\frac{(1-|z|^2)^{(n+1)q/p}}{|1-\langle w, z\rangle|^{2(n+1)q/p}}d\mu(w)<\infty.
\end{equation}
 By \cite[Theorem 1]{Ya98}, we obtain the equivalence between \eqref{car-tube1} and \eqref{berez}.
\end{proof}

 Similar to the proof of Theorem \ref{TH0}, by \cite[Theorem 3.2]{HLZ16} and \cite[Lemma 1]{Hu03}, we obtain the following theorem.

\begin{theorem}\label{TH0a}
Suppose $0<p\leq q<\infty$, and $\mu$ is a positive Borel measure on $\B$.
Then  the following assertions are equivalent:
\begin{enumerate}
\item[(1)] $\mu$ is a vanishing $q/p$-Carleson measure for $A^p (\B)$;

\item[(2)] for some (or all) $R>0$,
$$
\lim_{|a|\rightarrow 1}\frac{\mu(D(a, R))}{(1-|a|^2)^{(n+1)q/p}}=0;
$$

\item[(3)] $\mu$ satisfies
$$\lim_{r\rightarrow 0} \frac{\mu(Q_r(\zeta))}
{r^{(n+1)q/p}}=0$$
 uniformly for $\zeta\in \Sn$.
 \end{enumerate}
\end{theorem}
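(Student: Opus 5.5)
The plan follows the proof of Theorem \ref{TH0}, replacing each boundedness statement by its vanishing counterpart. Write $\lambda=(n+1)q/p\ge n+1$.

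\textbf{Step 1: (1)$\iff$(2).} Take $\alpha=0$ and $t=2q/p$ in \cite[Theorem 3.2]{HLZ16}. For $p\le q$ this gives that $\mu$ is a vanishing $q/p$-Carleson measure for $A^p(\B)$ if and only if
$$\lim_{|a|\to1}\frac{\mu(D(a,R))}{(1-|a|^2)^{\lambda}}=0$$
for some (equivalently all) $R>0$. The same theorem, or the standard argument behind it, also shows that this is equivalent to the vanishing Berezin-type condition
$$\lim_{|z|\to1}\int_{\B}\frac{(1-|z|^2)^{\lambda}}{|1-\langle w,z\rangle|^{2\lambda}}\,d\mu(w)=0. \qquad (\ast)$$
The direction (2)$\Rightarrow(\ast)$ goes as follows. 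Fix a lattice $\{a_k\}$ and split $\B$ into $|w|\le r_0$ and its complement. The compact part contributes $O((1-|z|^2)^\lambda)\to0$. On the rest, use $\mu(D(a_k,R))\le\varepsilon(1-|a_k|^2)^\lambda$ together with \eqref{Bergman-1}. This reduces matters to $\varepsilon$ times
$$\sup_z\int_\B \frac{(1-|z|^2)^\lambda(1-|w|^2)^{\lambda-n-1}}{|1-\langle w,z\rangle|^{2\lambda}}\,dv(w),$$
which is bounded by the Forelli--Rudin estimates because $\lambda>0$. The converse direction follows by restricting the integral to $D(z,R)$.

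\textbf{Step 2: (3)$\iff(\ast)$.} This is the vanishing analogue of \cite[Theorem 1]{Ya98}, and I would take it from \cite[Lemma 1]{Hu03}.

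For $(\ast)\Rightarrow$(3), choose $z=(1-r)\zeta$. For $w\in Q_r(\zeta)$ we have $|1-\langle w,z\rangle|\lesssim r$. Hence the integrand is $\gtrsim r^{-\lambda}$ on $Q_r(\zeta)$, which gives $\mu(Q_r(\zeta))/r^\lambda\lesssim$ the Berezin integral at $z$. This tends to $0$ as $r\to0$, uniformly in $\zeta$, since $|z|=1-r$.

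For (3)$\Rightarrow(\ast)$, fix $z=|z|\zeta$ with $\delta=1-|z|$. Decompose $\B$ into the dyadic coronas $Q_{2^{k}\delta}(\zeta)\setminus Q_{2^{k-1}\delta}(\zeta)$, whose number is $O(\log(1/\delta))$. On the $k$-th piece $|1-\langle w,z\rangle|\gtrsim 2^k\delta$, so the contribution is at most
$$\frac{\delta^\lambda\,\mu(Q_{2^k\delta}(\zeta))}{(2^k\delta)^{2\lambda}}\lesssim \eta(2^k\delta)\,2^{-k\lambda},$$
where $\eta(r)=\sup_{\zeta}\mu(Q_r(\zeta))/r^\lambda$. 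Here $\eta$ is bounded by (3) (the tail for $r$ near $1$ is controlled by $\mu(\B)<\infty$, which (3) also gives), and $\eta(r)\to0$ as $r\to0$.

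\textbf{Main obstacle.} The hard step is passing to the limit in this last sum. Split it at a fixed index $k_0$. The terms with $k\le k_0$ are at most $\sup_{r\le 2^{k_0}\delta}\eta(r)\sum_k 2^{-k\lambda}$, which tends to $0$ as $\delta\to0$. The terms with $k>k_0$ are at most $\|\eta\|_\infty 2^{-k_0\lambda}$, which is small once $k_0$ is large. The point to check carefully is that the geometric factor $2^{-k\lambda}$ survives, which uses the full power $2\lambda$ in the kernel. Combining Steps 1 and 2 gives (1)$\iff$(2)$\iff$(3).
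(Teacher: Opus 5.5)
Your proposal is correct and follows the paper's route. The paper likewise gets (1)$\iff$(2), and the equivalence with the vanishing Berezin condition $(\ast)$, from \cite[Theorem 3.2]{HLZ16}, and gets $(\ast)\iff$(3) from \cite[Lemma 1]{Hu03}; your lattice and dyadic-corona arguments only make explicit what the paper cites. Two small corrections: the Forelli--Rudin bound needs $\lambda>n$ (true since $\lambda\ge n+1$), not just $\lambda>0$; and (3) controls $\mu$ only near $\Sn$, so $\mu(\B)<\infty$ relies on the implicit standing assumption that $\mu$ is finite on compact subsets of $\B$ (without it the equivalence fails, e.g.\ for an infinite point mass at $0$).
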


Theorems~\ref{TH0} and~\ref{TH0a} show the close connection between Bergman balls and Carleson cubes for the standard Bergman
Carleson exponent $(n+1)q/p$, where the equivalence is classical. Proposition~\ref{pro1} below treats  the more delicate situation for the
exponent $n\gamma$ with $\gamma>1$.
Here the standard Bergman Carleson framework is unavailable: Indeed, the embedding $A^{p_0}\hookrightarrow L^{q_0}$ with
$q_0/p_0=n\gamma/(n+1)$ requires $q_0\ge p_0$, which forces $\gamma\ge (n+1)/n$ and therefore excludes the range $1<\gamma<(n+1)/n$. To cover the full range $\gamma>1$, we instead use an
endpoint $p=\infty$ extension of \cite[Lemma~2.5]{HLZ16}, combined with Yang's Berezin-type characterization \cite[Theorem~1]{Ya98} of
$p$-Carleson measures. Remark~\ref{remark2.4} below shows that the equivalence fails throughout the range $(n-1)/n<\gamma\le 1$, so in
particular the condition $\gamma>1$ cannot be weakened to $\gamma\ge 1$.

\begin{proposition}\label{pro1}
Suppose $\gamma>1$, and $\mu$ is a positive Borel measure on $\B$.
Then
\begin{enumerate}
\item[(1)]  for some (or all) $R>0$,
 \begin{equation}\label{Bergmanball-2}
\sup\limits_{a\in\B}\frac{\mu(D(a, R))}
{(1-|a|^2)^{n\gamma}}<\infty
\end{equation}
if and only if
 \begin{equation}\label{Bergmanball-3}
\sup\limits_{\zeta\in \Sn, \, r\in(0,1)}\frac{\mu(Q_r(\zeta))}
{r^{n\gamma}}<\infty;
\end{equation}

\item[(2)] for some (or all) $R>0$,
\begin{equation}\label{Bergmanball-4}
    \lim\limits_{|a|\rightarrow 1}\frac{\mu(D(a, R))}{(1-|a|^2)^{n\gamma}}=0
\end{equation}
if and only if
\begin{equation}\label{Bergmanball-5}
\lim\limits_{r\rightarrow 0}\frac{\mu(Q_r(\zeta))}
{r^{n\gamma}}=0
\end{equation}
uniformly for $\zeta\in \Sn$.
\end{enumerate}
\end{proposition}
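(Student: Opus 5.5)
The plan is to pass through a Berezin-type quantity, as in the proof of Theorem~\ref{TH0}:
$$
B_\gamma(\mu):=\sup_{z\in\B}\int_{\B}\frac{(1-|z|^2)^{n\gamma}}{|1-\langle w,z\rangle|^{2n\gamma}}\,d\mu(w).
$$
For $\gamma>1$, Yang's \cite[Theorem~1]{Ya98} says that \eqref{Bergmanball-3} holds if and only if $B_\gamma(\mu)<\infty$. Here the exponent $n\gamma$ exceeds $n$, which is exactly the range covered there. So part (1) reduces to showing that \eqref{Bergmanball-2} holds if and only if $B_\gamma(\mu)<\infty$.

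One direction is elementary. If $B_\gamma(\mu)<\infty$, restrict the integral at $z=a$ to $D(a,R)$ and use \eqref{Bergman-1}, which gives $|1-\langle w,a\rangle|\asymp 1-|a|^2$ there. This yields
$$
\mu(D(a,R))\lesssim (1-|a|^2)^{n\gamma}B_\gamma(\mu).
$$
Alternatively, one can note directly that $D(a,R)\subset Q_{cr}(\zeta)$ with $\zeta=a/|a|$ and $r=1-|a|$, so \eqref{Bergmanball-3} implies \eqref{Bergmanball-2} immediately.

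For the converse, assume \eqref{Bergmanball-2} and fix an $r$-lattice $\{a_k\}$ in the Bergman metric, so that the balls $D(a_k,R)$ cover $\B$ with bounded overlap. Then
$$
\int_{\B}\frac{(1-|z|^2)^{n\gamma}\,d\mu(w)}{|1-\langle w,z\rangle|^{2n\gamma}}
\lesssim \sum_k \frac{(1-|z|^2)^{n\gamma}\,\mu(D(a_k,R))}{|1-\langle a_k,z\rangle|^{2n\gamma}}
\lesssim \sum_k \frac{(1-|z|^2)^{n\gamma}(1-|a_k|^2)^{n\gamma}}{|1-\langle a_k,z\rangle|^{2n\gamma}}.
$$
Next, replace each summand by an integral over $D(a_k,R)$ with respect to $d\tau$, using $\tau(D(a_k,R))\asymp1$. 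This bounds the sum by
$$
\int_{\B}\frac{(1-|z|^2)^{n\gamma}(1-|w|^2)^{n\gamma-n-1}}{|1-\langle w,z\rangle|^{2n\gamma}}\,dv(w).
$$
By the standard Forelli--Rudin estimate \cite[Theorem~1.12]{Zh04}, this integral is bounded uniformly in $z$ provided $n\gamma-n-1>-1$, i.e.\ $\gamma>1$. With exponents $t=n\gamma-n-1$ and $c=2n\gamma-(n+1+t)=n\gamma>0$, the integral is $\asymp(1-|z|^2)^{-n\gamma}$, and this cancels the prefactor. This is the endpoint $p=\infty$ version of \cite[Lemma~2.5]{HLZ16}, and it is the step where $\gamma>1$ is used. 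I expect it to be the main obstacle. For $\gamma\le1$ the weight $(1-|w|^2)^{n\gamma-n-1}$ is not integrable, and Remark~\ref{remark2.4} indicates the equivalence genuinely fails there. The "some or all $R$" clause follows because any two radii give comparable conditions by covering one ball with boundedly many balls of the other radius.

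For part (2), I would run the same chain with vanishing versions. That \eqref{Bergmanball-5} implies \eqref{Bergmanball-4} follows from the inclusion $D(a,R)\subset Q_{c(1-|a|)}(a/|a|)$. For the converse, the goal is the vanishing Berezin condition
$$
\lim_{|z|\to1}\int_{\B}\frac{(1-|z|^2)^{n\gamma}}{|1-\langle w,z\rangle|^{2n\gamma}}\,d\mu(w)=0,
$$
and then to apply the little-oh counterpart of Yang's theorem (the vanishing analogue used, via \cite[Lemma~1]{Hu03}, in Theorem~\ref{TH0a}) to get \eqref{Bergmanball-5}. To prove the vanishing Berezin condition, split the lattice sum into $|a_k|\le\rho$ and $|a_k|>\rho$. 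On the second part, $\mu(D(a_k,R))\le\varepsilon(1-|a_k|^2)^{n\gamma}$, and the Forelli--Rudin bound above gives a contribution of at most $C\varepsilon$. The first part is a finite sum, and each term tends to $0$ as $|z|\to1$ because of the factor $(1-|z|^2)^{n\gamma}$. If the cited vanishing version of Yang's theorem is not available in the form needed, I would instead prove the cube estimate directly. Fix $Q_r(\zeta)$, take $z=(1-r)\zeta$, and note that $|1-\langle w,z\rangle|\lesssim r$ on $Q_r(\zeta)$. Then
$$
\frac{\mu(Q_r(\zeta))}{r^{n\gamma}}\lesssim\int_{\B}\frac{(1-|z|^2)^{n\gamma}}{|1-\langle w,z\rangle|^{2n\gamma}}\,d\mu(w),
$$
which gives both the bounded and the vanishing implication without citing Yang.
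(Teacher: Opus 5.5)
Your proof is correct. Its central step is the same mechanism as the paper's endpoint $p=\infty$ extension of \cite[Lemma~2.5]{HLZ16}. You bound the Berezin-type integral by lattice averages of $\mu$, and then by $\int_{\B}(1-|w|^2)^{n\gamma-n-1}|1-\langle w,z\rangle|^{-2n\gamma}\,dv(w)\asymp(1-|z|^2)^{-n\gamma}$. With $s=n+1-n\gamma$, the paper's requirement $s<1$ is exactly your $n\gamma-n-1>-1$. You discretize with a lattice, where the paper uses the averaging function $\widehat{\mu}_R$ and the operator $T_{t,-s}$.

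The genuine differences lie elsewhere. The paper passes from the Berezin condition to the cube condition by citing \cite[Theorem~1]{Ya98}. You instead close the cycle ball $\Rightarrow$ Berezin $\Rightarrow$ cube $\Rightarrow$ ball using only the test point $z=(1-r)\zeta$ and the inclusion $D(a,R)\subset Q_{c(1-|a|)}(a/|a|)$. This makes part (1) self-contained and never uses the harder half of Yang's theorem, so your remark about which exponents Yang covers is not needed.

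For part (2), the paper uses a soft device. The proof of part (1) bounds the cube supremum by the ball supremum with a constant independent of the measure. The paper applies this to the truncations $\chi_{\{|z|>r\}}\,d\mu$ and notes that cubes $Q_\tau(\zeta)$ with $\tau<1-r$ lie in $\{|z|>r\}$. You instead reopen the estimate and split the lattice sum into two parts. The finitely many interior terms are each killed by the factor $(1-|z|^2)^{n\gamma}$, and the tail is controlled by $\varepsilon$ times the Forelli--Rudin constant. Your route yields the vanishing Berezin condition as a by-product. The paper's truncation trick, by contrast, turns any measure-independent norm inequality into its vanishing form with no further computation.

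One point to make explicit: you replace $|1-\langle w,z\rangle|$ by $|1-\langle a_k,z\rangle|$ for $w\in D(a_k,R)$, uniformly in $z\in\B$. This needs the standard comparison lemma from \cite[Chapter~2]{Zh04}. It is stronger than \eqref{Bergman-1}, which only relates these quantities to $1-|a_k|^2$ when $z$ itself lies near $a_k$.
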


\begin{proof}
We first show that the equivalence in \cite[Lemma 2.5]{HLZ16} extends to the
endpoint case $p =\infty$. The implication (a) \(\Rightarrow\) (b) follows from the pointwise estimate \cite[(2.10)]{HLZ16}. Conversely, suppose that (b) holds. By the proof of \cite[Lemma 2.5]{HLZ16},
\[
\widetilde{\mu}_{t}(a)\delta (a)^{s - (n + 1)(1 - t / 2)}\leq C T_{t, - s}(\widehat{\mu}_{R}\delta^{s})(a),
\]
and hence
\[
T_{t, - s}(\widehat{\mu}_{R}\delta^{s})(a)\leq \left\| \widehat{\mu}_{R}\delta^{s}\right\|_{L^\infty} T_{t, - s}\mathbf{1}(a),
\]
where
\[
T_{t, - s}\mathbf{1}(a) = \delta (a)^{(n + 1)(t - 1) + s}\int_{\mathbb{B}_{n}}|K(a,w)|^{t}\delta (w)^{-s}d v(w).
\]
By  \cite[(2.4)]{HLZ16}, we have
\[
\int_{\mathbb{B}_{n}}|K(a,w)|^{t}\delta (w)^{-s}d v(w)\lesssim \delta (a)^{-s + n + 1 - (n + 1)t},
\]
since \(t > \frac{n}{n + 1}\) and \((n + 1)(1 - t)< s< 1\). Therefore
\[
T_{t, - s}\mathbf{1}(a)\lesssim 1,
\]
so \(T_{t, - s}\mathbf{1}\in L^{\infty}\). It follows that
\[
\widetilde{\mu}_{t}(a) \delta(a)^{s - (n + 1)(1 - t / 2)}\lesssim \| \widehat{\mu}_{R}\delta^{s}\|_{L^\infty},
\]
and hence (a) holds.  Similarly,  (b) and (c) in  \cite[Lemma 2.5]{HLZ16} are also equivalent when \(p = \infty\).

We now prove the equivalence of \eqref{Bergmanball-2} and \eqref{Bergmanball-3}. On the unit ball, notice that $\delta(z)=1-|z|$ and $D(a, R)$ is the Bergman metric ball. Then
\[
\widehat{\mu}_{R}(a) = \frac{\mu(D(a, R))}{v(D(a,R))}\asymp \frac{\mu(D(a, R))}{(1-|a|)^{n+1}}
\]
is exactly the averaging quantity appearing in \cite[Lemma 2.5]{HLZ16}. Set \(t = \frac{2n\gamma}{n + 1}\) and \(s = n + 1 - n\gamma\). Since \(\gamma >1\), we have \(t > \frac{n}{n + 1}\) and \((n + 1)(1 - t)< s< 1\).
 That lemma shows that, for \(1< p\leq \infty\)
\[
\widetilde{\mu}_{t}(a)(1 - |a|)^{s - (n + 1)(1 - t / 2)}\qquad \text{and}\qquad \widehat{\mu}_{R}(a)(1 - |a|)^{s}
\]
have comparable \(L^{p}(\mathbb{B}_{n})\) norms. Especially,
\[
\widehat{\mu}_{R}(a)(1 - |a|)^{s}\in L^{\infty}(\mathbb{B}_{n})\iff \widetilde{\mu}_{t}(a)(1 - |a|)^{s - (n + 1)(1 - t / 2)}\in L^{\infty}(\mathbb{B}_{n}).
\]
Since \(s - (n + 1)(1 - \frac{t}{2}) = 0\) and \(1 - |a|\asymp 1 - |a|^{2}\), it follows that \eqref{Bergmanball-2} is equivalent to
\[
\sup_{a\in \mathbb{B}_{n}}\widetilde{\mu}_{t}(a)< \infty.
\]
Notice that
\[
\widetilde{\mu}_{t}(a) = \int_{\mathbb{B}_{n}}\left(\frac{|K(a,z)|}{\sqrt{K(a,a)}}\right)^{t}d\mu (z)
= \int_{\mathbb{B}_{n}}\frac{(1 - |a|^{2})^{n\gamma}}{|1 - \langle z,a\rangle|^{2n\gamma}} d\mu (z),
\]
where
\[
K(z,w) = \frac{1}{(1 - \langle z,w\rangle)^{n + 1}}
\]
is the Bergman kernel. Therefore \eqref{Bergmanball-2} is equivalent to
\[
\sup_{a\in \mathbb{B}_{n}}\int_{\mathbb{B}_{n}}\frac{(1 - |a|^{2})^{n\gamma}}{|1 - \langle z,a\rangle|^{2n\gamma}} d\mu (z)< \infty,
\]
which in turn is equivalent to   \eqref{Bergmanball-3} by \cite[Theorem~1]{Ya98}.

Moreover, the proof of part~(1) yields the estimate
\begin{equation}\label{Bergmanball-6}
\sup_{\zeta\in\Sn,\ \tau\in(0,1)}
\frac{\nu(Q_\tau(\zeta))}{\tau^{n\gamma}}
\lesssim
\sup_{a\in\B}\frac{\nu(D(a,R))}{(1-|a|^2)^{n\gamma}}
\end{equation}
for every positive Borel measure $\nu$ on $\B$, with an implicit constant
independent of $\nu$.

It remains to prove the second assertion. Suppose that~\eqref{Bergmanball-4} holds. For $0<r<1$, define
$$
d\mu_r=\chi_{\{|z|>r\}}\,d\mu.
$$
We show that
\begin{equation}\label{claim}
    \lim_{r\to 1} \sup_{a\in\B}\frac{\mu_r(D(a,R))}{(1-|a|^2)^{n\gamma}}=0.
\end{equation}
We may assume that there is a $w$ in $D(a,R)\cap\{|w|>r\}$. Then \eqref{Bergman-1} implies that $1-|w|^2\asymp 1-|a|^2$. As $|w|>r$, we have $1-|a|^2\asymp 1-|w|^2<1-r^2\to0$ as $r\to1$, so $|a|\to1$. Since $\mu_r(D(a,R))\le \mu(D(a,R))$, the claim in~\eqref{claim} follows from
\eqref{Bergmanball-4}.

Applying \eqref{Bergmanball-6} with $\nu=\mu_r$ and using \eqref{claim}, we obtain
$$
\sup_{\zeta\in\Sn,\ \tau\in(0,1)}
\frac{\mu_r(Q_\tau(\zeta))}{\tau^{n\gamma}}\to0
\qquad\text{as }r\to1.
$$
Now, if $z\in Q_\tau(\zeta)$, then $1-|z|
\le 1-|\langle z,\zeta\rangle|
\le |1-\langle z,\zeta\rangle|
<\tau$.
Hence $Q_\tau(\zeta)\subset\{|z|>r\}$ whenever $\tau<1-r$, and therefore
$$
\mu(Q_\tau(\zeta))=\mu_r(Q_\tau(\zeta))
\qquad\text{for }0<\tau<1-r.
$$
It follows that
$$
\sup_{\zeta\in\Sn,\ 0<\tau<1-r}
\frac{\mu(Q_\tau(\zeta))}{\tau^{n\gamma}}\to0
\qquad\text{as }r\to1^-,
$$
which is \eqref{Bergmanball-5}.

Conversely, assume that \eqref{Bergmanball-5} holds. Let $a\in\B\setminus\{0\}$ and set $\zeta_a=a/|a|$.
If $z\in D(a,R)$, then \eqref{Bergman-1} implies
that $|1-\langle z,a\rangle|\lesssim 1-|a|^2$. Hence,
\begin{equation*}
    |1-\langle z,\zeta_a\rangle| \le |1-\langle z,a\rangle|+|\langle z,a-\zeta_a\rangle| \lesssim 1-|a|^2+|a-\zeta_a|\lesssim 1-|a|^2.
\end{equation*}
Therefore, $D(a,R)\subset Q_{C(1-|a|^2)}(\zeta_a)$ for some $C>0$ independent of $a$, and so
$$
\frac{\mu(D(a, R))}{(1-|a|^2)^{n\gamma}}
\lesssim
\frac{\mu(Q_{C(1-|a|^2)}(\zeta_a))}{(C(1-|a|^2))^{n\gamma}}.
$$
Since $C(1-|a|^2)\to0$ as $|a|\to1$, \eqref{Bergmanball-5} implies \eqref{Bergmanball-4}.
\end{proof}

\begin{remark}\label{remark2.4}  Proposition \ref{pro1} is invalid for $(n-1)/n< \gamma\leq 1$.
\end{remark}

In order to explain this clearly, we need to introduce the  $\qp$ space.
The
invariant Green's function on $\B$ is given by
$$
G(z, a)=g\left(\varphi_a(z)\right),
$$
 where
$$g(z)=\frac{n+1}{2n}\int_{|z|}^{1}\frac{(1-t^2)^{n-1}}{t^{2n-1}}\,dt.$$ The $\qp$ space  consists of all   $f\in H(\B)$
such that
$$
\|f\|_{\qp}=|f(0)|+\sup_{a\in \B}\left(\int_{\B}|\widetilde{\nabla}f(z)|^2G^p(z,a)
\,d\tau(z)\right)^{1/2}<\infty.
$$
When $0<p\leq(n-1)/n$ or $p\geq n/(n-1)$, the space $\qp$
contains only the constant functions; when $(n-1)/n<p<n/(n-1)$, $\qp$  space  contains all polynomials.  Moreover, $\mathcal{Q}_1=\mathrm{BMOA}$, and
 $\mathcal{Q}_p=\mathcal{B}$ whenever $1<p<n/(n-1)$.  Note that $$\mathcal{Q}_p\subsetneq \mathrm{BMOA}\subsetneq\mathcal{B} \qquad \textrm{if} \quad (n-1)/n<p<1.$$ More information can be found in \cite{Xiao01, Xiao06, Xiao08} on the unit disk or \cite{Oy98, Ya98} on the unit ball.

To verify Remark~\ref{remark2.4}, choose some function $f\in \mathcal{B}\backslash \mathcal{Q}_\gamma$ for $(n-1)/n<\gamma\leq 1$. Setting
$$d\nu(z)=(1-|z|^2)^{n\gamma}|\widetilde{\nabla} f(z)|^2\,d\tau(z),$$
then $\nu$
is a positive Borel measure on $\B$.
Since $f$ does not belong to $\mathcal{Q}_\gamma$ space, by \cite[Theorem 2]{Ya98} (the semi-norm for  $\mathcal{Q}_\gamma$  is comparably dominated by the geometric quantity), we obtain
$$ \sup_{\zeta\in \Sn,\, r\in(0, 1)}\frac{\nu(Q_r(\zeta))}
{r^{n\gamma}}=\infty.$$
On the other hand, for $R>0$, we obtain
\begin{align*}
\sup_{a\in \B}\frac{\nu(D(a, R))}{(1-|a|^2)^{n\gamma}}&=\sup_{a\in \B}
\frac{\displaystyle\int_{D(a, R)}(1-|z|^2)^{n\gamma}|\widetilde{\nabla} f(z)|^2\,d\tau(z)}{(1-|a|^2)^{n\gamma}}\\
&\lesssim\|f\|^2_{\mathcal{B}}\sup_{a\in \B}\displaystyle\int_{D(a, R)}d\tau(z)
\lesssim\|f\|^2_{\mathcal{B}}<\infty.
\end{align*}

\section{Tent embedding theorems}\label{embedding}
In this section we will  study  embedding theorems. First, we will introduce  some notations.
Suppose  $0<q<\infty$ and $s, t$ are real numbers. Let $\mu$ be a positive Borel measure  on $\B$. For $R>0$, the   space  $T^{q}_{s, t, R}(\B, \mu)$ is the set of all $\mu$-measurable functions $f$ on
$\B$ such that
$$\|f\|_{T^{q}_{s, t, R}(\B, \mu)}^q=\sup_{a\in\B}\frac{\int_{D(a, R)}|f(z)|^q\,d\mu(z)}{(1-|a|^2)^s\left(\log\frac{2}
{1-|a|} \right)^t} <\infty.$$
By the standard   covering argument, we know that the space $T^q_{s,t, R}(\B, \mu)$ is independent of the
radius $R$, denoted by  $T^q_{s,t}(\B, \mu)$ for simplicity. Borrowing terminology from harmonic
analysis, we will call $T^q_{s,t}(\B, \mu)$ a tent space, or more precisely, a logarithmic
tent space in the Bergman metric.

We now present the embedding theorem from Besov spaces to logarithmic tent spaces in the Bergman metric, stated as follows.

\begin{theorem}\label{TH2}
Let $1\leq p<\infty$, $0<q<\infty$ and let $s, t$ be  real numbers. Suppose $\mu$ is a positive Borel measure on $\B$. Then   $i: B_p(\B)\rightarrow T_{s, t}^{q}(\B, \mu)$ is bounded if and only if
$$\|\mu\|_{s,\, t,\, p,\, q}:=\sup_{a\in\B}\frac{\mu(D(a, R))}{(1-|a|^2)^s\left(\log\frac{2}{1-| a|^2}\right)^{t-q+\frac{q}{p}}}<\infty$$
for some (or any) $R>0$. Furthermore,
\begin{equation}\label{Eq2.02}
\|i\|^q_{B_p(\B)\rightarrow T_{s, t}^{q}(\B, \mu)}\asymp\|\mu\|_{s,\, t,\, p,\, q}.
\end{equation}
\end{theorem}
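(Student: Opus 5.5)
The plan is to reduce everything to two facts about $B_p$: a sharp pointwise growth estimate, and a family of test functions that attain it. For $1\le p<\infty$ one has, for $f\in B_p(\B)$,
$$
|f(z)|\lesssim \|f\|_{B_p}\left(\log\frac{2}{1-|z|^2}\right)^{1-\frac1p},\qquad z\in\B .
$$
I would prove this from the representation $f=P_\beta\big((1-|w|^2)^N\Re^N f\big)$, or, more directly, from the integral representation of $f$ in terms of $(1-|w|^2)^N\partial^m f$ together with H\"older's inequality against $d\tau$. The dual exponent produces an integral $\int_{\B}(1-|w|^2)^{Np'}|1-\langle z,w\rangle|^{-(n+1+N)p'}\,dv(w)$-type expression whose Forelli--Rudin estimate is exactly $\log\frac{2}{1-|z|^2}$ to the power $1$. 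For $p=1$ the bound is just $\|f\|_\infty\lesssim\|f\|_{B_1}$.

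\textbf{Sufficiency.} Assume $\|\mu\|_{s,t,p,q}<\infty$. For $z\in D(a,R)$ we have $1-|z|^2\asymp 1-|a|^2$, so the logarithms are comparable. The pointwise bound then gives
$$
\int_{D(a,R)}|f|^q\,d\mu\lesssim \|f\|^q_{B_p}\left(\log\frac{2}{1-|a|^2}\right)^{q-\frac qp}\mu(D(a,R)).
$$
Dividing by $(1-|a|^2)^s(\log\frac{2}{1-|a|})^t$, and using that $\log\frac{2}{1-|a|}\asymp\log\frac{2}{1-|a|^2}$, gives $\|f\|^q_{T^q_{s,t}}\lesssim\|\mu\|_{s,t,p,q}\|f\|^q_{B_p}$.

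\textbf{Necessity.} For each $a$ I need $f_a\in B_p$ with $\|f_a\|_{B_p}\lesssim1$ and $|f_a(z)|\gtrsim(\log\frac{2}{1-|a|^2})^{1-1/p}$ on $D(a,R)$. The natural choice is
$$
f_a(z)=\left(\log\frac{2}{1-\langle z,a\rangle}\right)^{1-\frac1p}\quad (p>1),
$$
or the normalized version $\big(\log\frac{2}{1-|a|^2}\big)^{-1/p}\log\frac{2}{1-\langle z,a\rangle}$. I would use the second, since it is easier to handle: with $L_a=\log\frac{2}{1-|a|^2}$, it suffices to show $\|\log\frac{2}{1-\langle\cdot,a\rangle}\|^p_{B_p}\lesssim L_a$. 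For this:
\begin{itemize}
\item[(i)] Since $\partial^m$ of the log is $\asymp \bar a^m(1-\langle z,a\rangle)^{-N}$, the $B_p$ integral becomes $\int_{\B}(1-|z|^2)^{Np-n-1}|1-\langle z,a\rangle|^{-Np}\,dv$. Taking $N>n/p$, the Forelli--Rudin estimate with exponent $c=Np-(Np-n-1)-(n+1)=0$ gives $\asymp L_a$.
\item[(ii)] On $D(a,R)$, $|\log\frac{2}{1-\langle z,a\rangle}|\ge\log\frac{2}{|1-\langle z,a\rangle|}\ge L_a-C$, so for $|a|$ near $1$ we get $|f_a|\gtrsim L_a^{1-1/p}$ there. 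For $|a|$ bounded away from $1$, use $f\equiv1$.
\end{itemize}
Testing the embedding on $f_a$ then yields $\mu(D(a,R))L_a^{q-q/p}\lesssim\|i\|^q(1-|a|^2)^s L_a^t$, which is the claim together with the two-sided norm estimate \eqref{Eq2.02}.

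The main obstacle is step (i), the uniform norm control of the test functions. It relies on the critical case $c=0$ of the Forelli--Rudin estimates, which is precisely what produces the logarithm, together with the lower-order terms in the norm being bounded. Independence of $R$ follows from the standard covering argument already mentioned.
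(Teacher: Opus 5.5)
Your proof is correct. The necessity half is essentially the paper's: you use the same test function $\log\frac{2}{1-\langle z,a\rangle}$, normalized as the paper does in the proof of Theorem~\ref{TH3}, and its $B_p$ norm is controlled by the critical case $c=0$ of Forelli--Rudin. You also handle $|a|\le r_0$ explicitly via $f\equiv1$, which the paper passes over.

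The sufficiency half takes a different and more elementary route. The paper writes $|f(z)|\le|f(a)|+|f(z)-f(a)|$ and bounds $f(a)$ by the growth estimate from \eqref{besoveq}. It then controls the difference term by a Carleson-measure argument: it builds the auxiliary measure $\lambda$ and uses Theorem~\ref{TH0} for $A^{q/2}(\B)\hookrightarrow L^q(\B,d\lambda)$. Finally it enlarges the integral to all of $\B$ and applies the $\beta$-weighted estimate of \cite[Lemma 1]{LZ19}. You instead apply the growth bound at every point of $D(a,R)$ and use $\log\frac{2}{1-|z|^2}\asymp\log\frac{2}{1-|a|^2}$ there. This comparison holds uniformly in $a$, not only for $|a|$ near $1$: $1-|z|^2\asymp 1-|a|^2$ by \eqref{Bergman-1}, and both logarithms are at least $\log 2$. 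Equivalently, the paper's difference term is trivial, since $\beta(z,a)<R$ on $D(a,R)$ already gives $|f(z)-f(a)|\lesssim R^{1-1/p}\|f\|_{B_p}$ by \eqref{besoveq}. What the paper's machinery buys is control of the global Berezin-type quantity $(1-|a|^2)\int_{\B}|f(z)-f(a)|^q|1-\langle z,a\rangle|^{-(2n+3)}\,d\lambda(z)$. That is more than the theorem needs; for tent spaces built on Bergman balls, your argument needs only the growth lemma and \eqref{Bergman-1}.

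One slip in your sketch of the growth lemma: the H\"older integral you display, $\int_{\B}(1-|w|^2)^{Np'}|1-\langle z,w\rangle|^{-(n+1+N)p'}\,dv(w)$, has Forelli--Rudin exponent $(n+1)(p'-1)>0$. It therefore grows like a negative power of $1-|z|^2$, not like a logarithm. The exponents must match. With a reproducing formula of the form $f(z)=\int_{\B}F(w)(1-|w|^2)^{\alpha}(1-\langle z,w\rangle)^{-(n+1+\alpha)}\,dv(w)$ and $F\in L^p(d\tau)$, the dual integral is $\int_{\B}(1-|w|^2)^{(n+1+\alpha)p'}|1-\langle z,w\rangle|^{-(n+1+\alpha)p'}\,d\tau(w)$, which is genuinely the critical case. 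In any event the bound is standard; it is what the paper takes from \cite[Exercise 6.21]{Zh04}.
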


\begin{proof}
To prove boundedness, we construct a test function belonging to the Besov space $B_p(\B)$. For $a=(a_1, \ldots, a_n)\in \B$,  consider
$$
    f_a(z)=\log\frac{2}{1-\langle z, a\rangle}.
$$
Then
$$
 \frac{\partial f_a}{\partial z_j}(z)=\frac{\overline{a_j}}{1-\langle z, a\rangle},\qquad \frac{\partial^2 f_a}{\partial z_j\partial z_k}(z)=\frac{\overline{a_ja_k}}{(1-\langle z, a\rangle)^2}, \qquad \ldots,
$$
similarly,  for any multi-index $m$ with $|m|=N$,
$$
    \left|\frac{\partial^m f_a}{\partial z^m}(z)\right|\lesssim\frac{1}{\left|1-\langle z, a\rangle\right|^N}
$$
Thus, when $N>n/p$, \cite[Theorem 1.12]{Zh04} implies
\begin{align*}
    \|f_a\|_{B_p}&\lesssim 1+\sum_{|m|=N}\left(\int_{\B}\frac{(1-|z|^2)^{Np-(n+1)}}{\left|1-\langle z, a\rangle\right|^{Np}}dv(z)\right)^{1/p}\\
    &\lesssim \left(\log\frac{2}{1-| a|^2}\right)^{1/p}.
\end{align*}
Notice that, for $z, w\in \B$, $\left|\log \frac{2}{z}\right|\asymp \left|\log \frac{2}{w}\right|$ whenever $|z|\asymp |w|$ and  $|w|\rightarrow 0$, thus $$|f_a(z)|\asymp\log\frac{2}{1-|a|^2},\qquad z\in D(a, R)$$ if $|a|$ is sufficiently close to $1^-$.
Since $i: B_p(\B) \to T_{s, t}^{q}(\B, \mu)$ is bounded, we obtain
\begin{align*}
\frac{\mu\left(D(a, R)\right)}{(1-|a|^2)^{s}\left(\log\frac{2}{1-| a|^2}\right)^{t-q}}&\lesssim\frac{\int_{D(a, R)}|f_a(z)|^q \,d\mu(z)}{(1-|a|^2)^{s}
\left(\log\frac{2}{1-|a|^2}\right)^t}\\
&\lesssim\|i\|^q_{B_p(\B)\rightarrow T_{s, t}^{q}(\B, \mu)}\|f_a\|^q_{B_p}\\
&\lesssim\|i\|^q_{B_p(\B)\rightarrow T_{s, t}^{q}(\B, \mu)}\left(\log\frac{2}{1-| a|^2}\right)^{q/p}.
\end{align*}
 This shows that
\begin{equation}\label{oEq01}
\sup_{a\in \B}\frac{\mu\left(D(a, R)\right)}{(1-|a|^2)^{s}\left(\log\frac{2}{1-| a|^2}\right)^{t-q+\frac{q}{p}}}\lesssim\|i\|^q_{B_p(\B)\rightarrow T_{s, t}^{q}(\B, \mu)}<\infty.
\end{equation}

Conversely, for any $f\in B_p(\B)$ and $a\in\B$, we need to prove
\begin{equation}\label{oEq02e2}
\frac{\int_{D(a, R)}
|f(z)|^q\,d\mu(z)}{(1-|a|^2)^{s}\left(\log\frac{2}{1-|a|^2}\right)^t}\lesssim \|\mu\|_{s,\, t,\, p,\, q}\|f\|_{B_p}^q.
\end{equation}
In fact, for $f\in B_p(\B)$, by \cite[Page 232 Exercise 6.21]{Zh04}, we have
\begin{equation}\label{besoveq}
|f(z)-f(a)|\lesssim\beta(z, a)^{1-\frac{1}{p}}\|f\|_{B_p},\qquad z, a\in\B.
\end{equation}
Set $z=0$. Then
$$|f(a)|\lesssim\left(\log\frac{2}{1-|a|^2}\right)^{1-\frac{1}{p}}\|f\|_{B_p}.$$
Thus
\begin{equation}\label{eq0l1}
\frac{\int_{D(a, R)}
|f(a)|^q\,d\mu(z)}{(1-|a|^2)^{s}\left(\log\frac{2}{1-|a|^2}\right)^t}\lesssim\frac{\mu\left(D(a, R)\right)}{(1-|a|^2)^{s}\left(\log\frac{2}{1-|a|^2}\right)^{t-q+\frac{q}{p}}}\|f\|^q_{B_p}.
\end{equation}
Next, we will  prove
\begin{equation}\label{Eq2.3}
\frac{\int_{D(a, R)}
|f(z)-f(a)|^q\,d\mu(z)}{(1-|a|^2)^{s}\left(\log\frac{2}{1-|a|^2}\right)^t}\lesssim\|\mu\|_{s,\, t,\, p,\, q}\|f\|^q_{B_p}.
\end{equation}
Notice that $q-q/p\geq 0$ and $$\log[2/(1-|z|^2)]\asymp \log[2/(1-|a|^2)]\ge\log2, \qquad z\in D(a, R)$$ if $|a|$ is sufficiently close to $1^-$.
From (\ref{Bergman-1}), we obtain
\begin{align*}
I(f)&:=\frac{\int_{D(a, R)}
|f(z)-f(a)|^q\,d\mu(z)}{(1-|a|^2)^{s}\left(\log\frac{2}{1-|a|^2}\right)^t}\\
&\lesssim (1-|a|^2)
\int_{D(a, R)}\frac{|f(z)-f(a)|^q }{\left|1-\langle z, a\rangle
\right|^{2n+3}}(1-|z|^2)^{2(n+1)-s}\\
&\qquad\qquad \times \left(\log\frac{2}{1-|z|^2}\right)^{q-q/p-t}\,d\mu(z)
\\
&=(1-|a|^2)\int_{D(a, R)}\frac{|f(z)-f(a)|^q }{\left|1-\langle z, a\rangle
\right|^{2n+3}}\,d\lambda(z)\\
&\le(1-|a|^2)\int_{\B}\frac{|f(z)-f(a)|^q }{\left|1-\langle z, a\rangle
\right|^{2n+3}}\,d\lambda(z),
\end{align*}
where $\lambda$ is a positive Borel measure on $\B$ defined to be $$
d\lambda(z)=(1-|z|^2)^{2(n+1)-s}\left(\log\frac{2}{1-|z|^2}\right)^{q-q/p-t}\,d\mu(z).
$$
By Theorem \ref{TH0} and (\ref{Bergman-1}), the condition $\|\mu\|_{s,\, t,\, p,\, q}<\infty$
implies that the embedding operator
$$i: A^{q/2}(\B)\rightarrow L^{q}(\B, d\lambda)$$
is bounded, furthermore
$$
\|\mu\|_{s,\, t,\, p,\, q}\asymp\sup_{a\in\B}\frac{\lambda\left(D(a, R)\right)}{(1-|a|^2)^{2(n+1)}}
\asymp\|i\|^q_{A^{q/2}(\B)\rightarrow L^{q}(\B, d\lambda)}.
$$
Thus,
\begin{align*}
I(f)\lesssim\|\mu\|_{s,\, t,\, p,\, q}(1-|a|^2)\left(\int_{\B}\frac{|f(z)-f(a)|^{q/2}}
{\left|1-\langle z,
a\rangle\right|^{\frac{2n+3}{2}}}\,dv(z)\right)^{2}.
\end{align*}
This, together with \eqref{besoveq}, gives
\begin{align*}
I(f)\lesssim\|\mu\|_{s,\, t,\, p,\, q}\|f\|^q_{B_p}(1-|a|^2)\left(\int_{\B}\frac{\beta(z,a)^{\frac{q-q/p}{2}}}{\left|1-\langle z, a\rangle
\right|^{\frac{2n+3}{2}}}\,dv(z)\right)^{2}.
\end{align*}
By \cite[Lemma 1]{LZ19}, we obtain
$$\int_{\B}\frac{\beta(z,a)^{\frac{q-q/p}{2}}}{\left|1-\langle z, a\rangle
\right|^{\frac{2n+3}{2}}}\,dv(z)\lesssim\frac{1}{(1-|a|^2)^{1/2}}.$$
This proves \eqref{Eq2.3}, with \eqref{eq0l1},  yields \eqref{oEq02e2}.
Therefore,  the embedding operator $i$ is bounded from $B_p(\B)$ to $T_{s, t}^{q}(\B, \mu)$. Estimate \eqref{Eq2.02} follows from \eqref{oEq01} and  \eqref{oEq02e2}.
\end{proof}

A bounded linear operator $T: B_p(\B)\to T^q_{s, t}(\B, \mu)$ is said to be compact if
$\|Tf_k\|_{T^q_{s, t}(\B, \mu)}\to0$ for every bounded sequence $\{f_k\}$ in $B_p(\B)$
that converges to $0$ uniformly on compact subsets of $\B$. The following result
is a natural companion to Theorem~\ref{TH2}.

\begin{theorem}\label{TH3}
Let $1\leq p<\infty$, $0<q<\infty$ and let  $s, t$ be real numbers. Suppose $\mu$ is a positive Borel measure on $\B$. Then  the embedding operator $i: B_p(\B)\rightarrow T_{s, t}^{q}(\B, \mu)$ is compact if and only if
\begin{equation}\label{log-car}
\lim_{|a|\rightarrow 1}\frac{\mu(D(a, R))}{(1-|a|^2)^{s}\left(\log\frac{2}{1-| a|^2}\right)^{t-q+\frac{q}{p}}}=0
\end{equation}
for some (or all) $R>0$.
\end{theorem}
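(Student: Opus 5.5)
Throughout, write
$$
L(a)=\log\frac{2}{1-|a|^2},\qquad \Phi(a)=\frac{\mu(D(a,R))}{(1-|a|^2)^{s}L(a)^{t-q+\frac{q}{p}}}.
$$
The plan follows the proof of Theorem~\ref{TH2}, tracking which terms are small instead of merely bounded.

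\emph{Necessity.} Suppose $i$ is compact. Use the normalized test functions $g_a=f_a/L(a)^{1/p}$, where $f_a(z)=\log\frac{2}{1-\langle z,a\rangle}$ is the function from the proof of Theorem~\ref{TH2}. There we showed $\|f_a\|_{B_p}\lesssim L(a)^{1/p}$, so $\{g_a\}$ is bounded in $B_p(\B)$. On compact sets $f_a$ stays bounded while $L(a)\to\infty$, so $g_a\to0$ locally uniformly as $|a|\to1$. Hence compactness gives $\|g_a\|_{T^q_{s,t}}\to0$. The lower estimate from Theorem~\ref{TH2} (using $|f_a|\asymp L(a)$ on $D(a,R)$) gives
$$
\Phi(a)\lesssim \|g_a\|^q_{T^q_{s,t}(\B,\mu)},
$$
which tends to $0$, and this is \eqref{log-car}. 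For a sequence $|a_k|\to1$ the same argument applies along the sequence.

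\emph{Sufficiency.} Assume \eqref{log-car}. Take $\{f_k\}$ bounded in $B_p$ with $f_k\to0$ locally uniformly. Split $\mu=\mu\chi_{\{|z|\le r\}}+\mu_r$ with $d\mu_r=\chi_{\{|z|>r\}}d\mu$. As in the proof of Proposition~\ref{pro1}, \eqref{log-car} implies $\|\mu_r\|_{s,t,p,q}\to0$ as $r\to1$. The reason is that if $D(a,R)$ meets $\{|z|>r\}$, then $1-|a|^2\asymp 1-|w|^2<1-r^2$, which forces $|a|\to1$.

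\begin{itemize}
\item The $\mu_r$ part. By the norm estimate \eqref{Eq2.02} applied to $\mu_r$, its contribution to $\|f_k\|^q_{T^q_{s,t}}$ is $\lesssim\|\mu_r\|_{s,t,p,q}\sup_k\|f_k\|^q_{B_p}$, which is small uniformly in $k$ once $r$ is close to $1$.
\item The compact part. $D(a,R)\cap\{|z|\le r\}$ is nonempty only for $a$ in a compact set $\{|a|\le r'\}$. On that set the denominator $(1-|a|^2)^sL(a)^t$ is bounded below. The numerator is at most $\sup_{|z|\le r}|f_k|^q\,\mu(\{|z|\le r\})$, which tends to $0$ as $k\to\infty$. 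Here $\mu(\{|z|\le r\})<\infty$ because the condition $\|\mu\|_{s,t,p,q}<\infty$, which follows from \eqref{log-car}, bounds $\mu$ on compact sets via a finite covering by Bergman balls.
\end{itemize}

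Combining the two parts gives $\limsup_k\|f_k\|_{T^q_{s,t}}=0$.

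\emph{Main obstacle.} The subtle point is the estimate applied to $\mu_r$. Theorem~\ref{TH2}'s proof uses $\log\frac{2}{1-|z|^2}\asymp L(a)$ on $D(a,R)$ only for $|a|$ near $1$, so I must check that the implicit constant in \eqref{Eq2.02} does not depend on $\mu$. That proof is linear in $\mu$ through $\lambda$ and Theorem~\ref{TH0}, so the constant is uniform, as in \eqref{Bergmanball-6}.
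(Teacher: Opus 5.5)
Your proposal is correct and follows essentially the same route as the paper: the normalized logarithmic test functions $g_a$ give necessity, and for sufficiency you use the cut-off measure $\mu_r$ together with the upper bound from Theorem~\ref{TH2}, whose constant does not depend on $\mu$. The differences are cosmetic. On the region $\{|z|\le r\}$ you use $\mu(\{|z|\le r\})<\infty$ and the fact that only $|a|\le r'$ contribute, whereas the paper bounds that part by $\|\mu\|_{s,t,p,q}\sup_{|z|\le r}|f_j|^q$ via $\left(\log\frac{2}{1-|a|^2}\right)^{q/p-q}\lesssim 1$. You also state explicitly the uniformity of the constant in \eqref{Eq2.02}, which the paper uses without comment.
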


\begin{proof}
The proof is similar to that of Theorem \ref{TH2}, with the following adjustments.
First, for any $a\in \B$, set
$$
f_a(z)=\left(\log\frac{2}{1-|a|^2}\right)^{-1/p}\log\frac{2}{1-\langle z, a\rangle}, \qquad z\in\B.
$$
By straight calculation as in the proof  of Theorem \ref{TH2}, we know
 $\left\{f_a\right\}_{a\in \B}$ is norm-bounded in $B_p(\B)$ and
$\left\{f_a\right\}$ converges to $0$ uniformly on any compact subset of
$\B$ as $|a|\rightarrow 1$. Suppose   $i$ is compact from $B_p(\B)$ to $T_{s, t}^{q}(\B, \mu)$, then
$$\frac{\mu(D(a, R))}{(1-|a|^2)^{s}\left(\log\frac{2}{1-| a|^2}\right)^{t-q+\frac{q}{p}}}\lesssim\frac{\int_{D(a, R)}|f_a(z)|^q\,d\mu(z)}{(1-|a|^2)^{s}
\left(\log\frac{2}{1-|a|^2}\right)^t}\to0$$
as $|a|\rightarrow 1$.

Conversely, suppose condition \eqref{log-car} holds. For any $r\in(0,1)$, define the cut-off measure
$$d\mu_r=\chi_{\{w\in \B: |w|>r\}}\,d\mu.$$
 By  \eqref{log-car}, we  conclude that $\|\mu\|_{s,\, t,\, p,\, q}<\infty$ and
$$\|\mu_r\|_{s,\, t,\, p,\, q}=\sup_{a\in\B}\frac{\mu_r\left(D(a, R)\right)}
{(1-|a|^2)^{s}\left(\log\frac{2}{1-| a|^2}\right)^{t-q+\frac{q}{p}}}\rightarrow  0$$
as $r \rightarrow 1$. Let $\{f_j\}$ be any norm-bounded sequence in $B_p(\B)$ with $f_j\rightarrow 0$ uniformly on any
compact subset of $\B$. Since $$
\left(\log\frac{2}{1-|a|^2}\right)^{q/p-q}\lesssim 1,
$$
for any $\varepsilon>0$, if  $r$ is sufficiently close to $1$, we have
\begin{align*}
\sup_{a\in \B}&\frac{\int_{D(a, R)}|f_j(z)|^q\,d\mu(z)}{(1-|a|^2)^{s}\left(\log\frac{2}{1-|a|^2}\right)^t}
\\
&\leq \sup_{a\in \B}\frac{\int_{D(a, R)\cap \{|z|\leq r\}}|f_j(z)|^q\,d\mu(z)
+\int_{D(a, R)}|f_j(z)|^q\,d\mu_r(z)}{(1-|a|^2)^{s}\left(\log\frac{2}{1-|a|^2}\right)^t}
\\
&\lesssim\|\mu\|_{s,\, t,\, p,\, q}\sup_{|z|\leq r}|f_j(z)|^q
+\|f_j\|^q_{B_p}\|\mu_r\|_{s,\, t,\, p,\, q}<\varepsilon
\end{align*}
whenever $j$ large enough. Therefore,
$$\lim_{j\rightarrow\infty}\|f_j\|_{T_{s, t}^{q}(\B, \mu)}=0,$$
which shows that $i: B_p(\B)\rightarrow T_{s, t}^{q}(\B, \mu)$ is compact.
\end{proof}

We are also interested in positive Borel measures $\mu$ on $\B$ with the property that $T_{s, t}^{q}(\B, \mu)$ contains the Bergman space $A^p_\alpha(\B)$. Similar to the proof above, we can obtain the following theorem.

\begin{theorem}\label{TH4}
Let $0<p,q<\infty$, $\alpha>-1$ and let $s, t$ be  real numbers. Suppose $\mu$ is a positive Borel measure on $\B$. Then
\begin{enumerate}
\item[(1)]
the embedding operator  $i: A^p_\alpha(\B)\rightarrow T_{s, t}^{q}(\B, \mu)$ is bounded if and only if
$$
\sup_{a\in\B}\frac{\mu(D(a, R))}{(1-|a|^2)^{s+(n+1+\alpha)q/p}\left(\log\frac{2}{1-| a|^2}\right)^{t}}<\infty
$$
for some (or any) $R>0$. Furthermore,
$$
\|i\|^q_{A^p_\alpha(\B)\rightarrow T_{s, t}^{q}(\B, \mu)}\asymp \sup_{a\in\B}\frac{\mu(D(a, R))}{(1-|a|^2)^{s+(n+1+\alpha)q/p}\left(\log\frac{2}{1-| a|^2}\right)^{t}}.
$$

\item[(2)] the embedding operator  $i: A^p_\alpha(\B)\rightarrow T_{s, t}^{q}(\B, \mu)$ is compact if and only if
$$\lim_{|a|\rightarrow 1}\frac{\mu(D(a, R))}{(1-|a|^2)^{s+(n+1+\alpha)q/p}\left(\log\frac{2}{1-| a|^2}\right)^{t}}=0$$
for some (or any) $R>0$.
\end{enumerate}
\end{theorem}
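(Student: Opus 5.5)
The plan is to follow the scheme of Theorems~\ref{TH2} and~\ref{TH3}. The Besov-space pointwise estimate is replaced by the subharmonicity estimate for Bergman functions. The logarithmic test function is replaced by normalized reproducing kernels. Write
$$
\Lambda(a)=\frac{\mu(D(a,R))}{(1-|a|^2)^{s+(n+1+\alpha)q/p}\left(\log\frac{2}{1-|a|^2}\right)^{t}}.
$$

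\emph{Necessity in (1).} For $a\in\B$ take
$$
f_a(z)=\frac{(1-|a|^2)^{b-(n+1+\alpha)/p}}{(1-\langle z,a\rangle)^{b}}
$$
with $b$ large, say $bp>n+1+\alpha$. By \cite[Theorem 1.12]{Zh04}, $\|f_a\|_{A^p_\alpha}\lesssim 1$. By \eqref{Bergman-1}, $|f_a(z)|\asymp(1-|a|^2)^{-(n+1+\alpha)/p}$ on $D(a,R)$. Inserting $f_a$ into the definition of the $T^q_{s,t}$ norm and restricting the supremum to the ball centred at $a$ gives
$$
\Lambda(a)\lesssim \frac{\int_{D(a,R)}|f_a|^q\,d\mu}{(1-|a|^2)^{s}\left(\log\frac{2}{1-|a|^2}\right)^t}\le \|i\|^q .
$$
For compactness, $f_a\to0$ uniformly on compact sets as $|a|\to1$, because $b>(n+1+\alpha)/p$. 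The same inequality then forces $\Lambda(a)\to0$.

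\emph{Sufficiency in (1).} Fix $R$ and use the pointwise estimate (subharmonicity on Bergman balls, \cite[Lemma 2.24]{Zh04}):
$$
|f(z)|^q\lesssim \frac{1}{(1-|z|^2)^{(n+1+\alpha)q/p}}\left(\int_{D(z,R)}|f|^p\,dv_\alpha\right)^{q/p}\le \frac{\|f\|^q_{A^p_\alpha}}{(1-|z|^2)^{(n+1+\alpha)q/p}} .
$$
For $z\in D(a,R)$ we have $1-|z|^2\asymp1-|a|^2$ by \eqref{Bergman-1}. Integrating over $D(a,R)$ against $\mu$ therefore gives
$$
\frac{\int_{D(a,R)}|f|^q\,d\mu}{(1-|a|^2)^s(\log\frac{2}{1-|a|^2})^t}\lesssim \Lambda(a)\|f\|^q_{A^p_\alpha}.
$$
Taking the supremum over $a$ yields the norm equivalence. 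Independence of $R$ follows from the standard covering argument already invoked for $T^q_{s,t}$.

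\emph{Sufficiency in (2).} We cut off $\mu$ as in Theorem~\ref{TH3}, setting $\mu_r=\chi_{\{|z|>r\}}\mu$. First, if $D(a,R)$ meets $\{|z|>r\}$ then $|a|\to1$ as $r\to1$, exactly as in the proof of Proposition~\ref{pro1}. Hence $\sup_a \Lambda_{\mu_r}(a)\to0$, where $\Lambda_{\mu_r}$ denotes $\Lambda$ computed with $\mu_r$ in place of $\mu$. Next take a bounded sequence $f_j\to0$ locally uniformly and split the integral over $D(a,R)$ into $\{|z|\le r\}$ and its complement. The part over $\{|z|\le r\}$ is at most $\sup_a\Lambda(a)\cdot\sup_{|z|\le r}|f_j|^q$. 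Here we need only the elementary fact that the weight factor $(1-|a|^2)^{(n+1+\alpha)q/p}$ is bounded, so this term tends to $0$ as $j\to\infty$. The part over $\{|z|>r\}$ is at most $\sup_a\Lambda_{\mu_r}(a)\,\|f_j\|^q$ by part (1) applied to $\mu_r$. That term is small uniformly in $j$ once $r$ is close to $1$.

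\emph{Main obstacle.} The only step needing care is in the necessity of (1): one must check that the logarithmic factor causes no trouble. It does not, since the test function lives on the single ball $D(a,R)$ and the logarithm depends only on $a$. Unlike Theorem~\ref{TH2}, no restriction such as $p\ge1$ or $p\le q$ is needed, because the argument is purely local.
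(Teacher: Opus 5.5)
Your proof is correct and follows essentially the same route as the paper. The paper uses the same kernel-type test function (your choice with $b=2(n+1+\alpha)/p$) for necessity and the subharmonic mean-value estimate for sufficiency. For part (2) the paper only says the argument is similar, noting that the test functions are bounded and tend to $0$ locally uniformly; your cut-off argument with $\mu_r$, modeled on Theorem~\ref{TH3}, correctly supplies the sufficiency direction the paper leaves implicit.
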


\begin{proof}
For (1), set
\begin{equation}\label{fun-test1}
f_a(z)=\frac{(1-|a|^2)^{\frac{n+1+\alpha}{p}}}{(1-\langle z,a \rangle)^{\frac{2(n+1+\alpha)}{p}}},\qquad z\in\B
\end{equation}
for any fixed $a\in\B$. Then $\|f_a\|_{A^p_\alpha}\lesssim 1$.
If $i: A^p_\alpha(\B)\rightarrow T_{s, t}^{q}(\B, \mu)$ is bounded, then
\begin{align*}
\frac{\mu(D(a, R))}{(1-|a|^2)^{s+(n+1+\alpha)q/p}\left(\log\frac{2}{1-| a|^2}\right)^{t}}&\asymp\frac{\int_{D(a, R)}|f_a(z)|^q \,d\mu(z)}{(1-|a|^2)^{s}
\left(\log\frac{2}{1-|a|^2}\right)^t}\\
&\lesssim\|i\|^q_{A^p_\alpha(\B)\rightarrow T_{s, t}^{q}(\B, \mu)}\|f_a\|^q_{A^p_\alpha}\\
&\lesssim\|i\|^q_{A^p_\alpha(\B)\rightarrow T_{s, t}^{q}(\B, \mu)}.
\end{align*}
Conversely, for any function $f\in A^p_\alpha(\B)$, by the  mean-value-inequality of subharmonic functions, we have
\begin{align*}
\frac{\int_{D(a, R)}|f(z)|^q \,d\mu(z)}{(1-|a|^2)^{s}
\left(\log\frac{2}{1-|a|^2}\right)^t}&\leq \frac{\mu(D(a, R))\sup\limits_{z\in D(a, R)}|f(z)|^q }{(1-|a|^2)^{s}
\left(\log\frac{2}{1-|a|^2}\right)^t}
\\
&\lesssim\frac{\mu(D(a, R))}{(1-|a|^2)^{s+(n+1+\alpha)q/p}\left(\log\frac{2}{1-| a|^2}\right)^{t}}\|f\|^q_{A^p_\alpha}.
\end{align*}

For (2), the proof is similar to that of (1), with minor modifications. Notice that  $\{f_a\}_{a\in \B}$ as in \eqref{fun-test1} is a bounded-norm sequence in  $A^p_\alpha(\B)$ and $f_a\rightarrow 0$ uniformly on compact subsets of $\B$ as $|a|\rightarrow 1$.
\end{proof}

\section{Applications to integral operators on  Besov spaces}\label{section 3}

In this section, we apply the tent embedding theorems established in Section~\ref{embedding} to study two integral operators. Recall from Section~\ref{Sect1} that, for $g \in H(\B)$, the Volterra-type integral operators $T_g$ and $I_g$ are defined on $H(\B)$ by
$$
    T_g f(z)=\int_0^1 f(tz)\,\Re g(tz)\,\frac{dt}{t}, \qquad
    I_g f(z)=\int_0^1 g(tz)\,\Re f(tz)\,\frac{dt}{t},
$$
where
$$
    \Re g(z)=\sum_{j=1}^{n}z_j\frac{\partial g}{\partial z_j}(z)
$$
is the radial derivative of $g$. Since
$$
    f(z)-f(0)=\int_0^1\frac{\Re f(tz)}{t}\,dt\quad \textrm{ and } \quad  \Re (fg)=g\Re f+f\Re g,
$$
it follows that
$$
    T_g f + I_g f + f(0)g(0) = gf,
$$
which shows that $T_g$ and $I_g$ are closely related to the multiplication operator  $M_g$,  defined by $M_g f = fg$. Further information on these integral operators can be found in \cite{JP24, MPPW, Xiao04, YL22} and the references therein.

As in the proof of \cite[Theorem 2]{Ya98},  $\|f\|^p_{F(p,\,q,\,s)}$ is equivalent to
$$
|f(0)|^p+\sup_{z\in \B}\int_{\B}|\nabla f(w)|^p(1-|w|^2)^q(1-|\varphi_z(w)|^2)^{ns}dv(w),
$$
which in turn is equivalent to
$$
|f(0)|^p+\sup_{z\in \B}\int_{\B}|\Re f(w)|^p(1-|w|^2)^q(1-|\varphi_z(w)|^2)^{ns}dv(w),
$$
see \cite[Theorem 3.1]{ZHC13}. Hence, \cite[Theorem 1]{Ya98} implies that $f\in F(p,\,q,\,s)$   if and only if
$$\sup_{\zeta\in \Sn, \, r\in(0,1)}\frac{\nu_f(Q_r(\zeta))}
{r^{ns}}<\infty,$$  where
 $d\nu_f(w)=|\Re f(w)|^p(1-|w|^2)^{q +ns}dv(w)$.
For $s>1$,   Proposition  \ref{pro1} shows that $f\in F(p,\,q,\,s)$  if and only if
$$
\sup_{a\in \B}\frac{\int_{D(a, R)}|\Re f(w)|^p(1-|w|^2)^{q +ns}dv(w)}{(1-|a|^2)^{ns}}<\infty
$$
for some (or all) $ R>0$. Moreover,
\begin{equation}\label{car-yub}
\|f\|^p_{F(p,\,q,\,s)}\asymp |f(0)|^p+\sup_{a\in \B}(1-|a|^2)^{q}\int_{D(a, R)}|\Re f(w)|^pdv(w).
\end{equation}

To characterize the boundedness, compactness, and essential norms of $T_g$ and $I_g$ acting from the Besov space $B_t(\B)$ to  $F(p,\,q,\,s)$, we introduce  Bloch-type and Herz-type spaces. For  $\alpha>0$ and $\beta \in \mathbb R$, the Bloch-type space $\mathcal{B}_{\alpha,\,\beta}$ consists of all $g\in H(\B)$ such that
$$
\|g\|_{\mathcal{B}_{\alpha, \beta}}=\sup_{z\in\B}|\Re g(z)|(1-|z|^2)^{\alpha}\left(\log\frac{2}{1-| z|^2}\right)^{\beta}<\infty,
$$
and the little Bloch-type space $\mathcal{B}^0_{\alpha,\,\beta}$ is the subspace of $\mathcal{B}_{\alpha,\,\beta}$ consisting of those $g \in \mathcal{B}_{\alpha,\,\beta}$ such that
$$
\lim_{|z|\rightarrow 1}|\Re g(z)|(1-|z|^2)^\alpha\left(\log\frac{2}{1-| z|^2}\right)^{\beta}=0.
$$
When $\alpha=1$ and $\beta=0$, $\mathcal{B}_{\alpha,\,\beta}$ and $\mathcal{B}^0_{\alpha,\,\beta}$ are  the classical Bloch space and the little Bloch  space, see  \cite{Zh04} for more information.

Recall that, for $\alpha\in \mathbb R$, the weighted Herz space, denoted by $\mathcal{H}^\infty_{\alpha}$, consists of all $f\in H(\B)$ for which
$$
\|f\|_{\mathcal{H}^\infty_{\alpha}}=\sup_{z\in \B} (1-|z|^2)^{\alpha}|f(z)|<\infty.
$$
The little weighted Herz space $\mathcal{H}^{0}_{\alpha}$ is the subspace of $\mathcal{H}^\infty_{\alpha}$ consisting of all $f \in \mathcal{H}^\infty_{\alpha}$ for which
$$
\lim_{|z|\rightarrow 1} (1-|z|^2)^{\alpha}|f(z)|=0.
$$
Note that when $\alpha<0$, the only function in $\mathcal{H}^\infty_{\alpha}$ (and hence in $\mathcal{H}^{0}_{\alpha}$) is the zero function.

The following theorem gives a complete characterization of the boundedness and compactness of $T_g$ from the Besov space $B_t(\B)$ to $F(p,\,q,\,s)$ in terms of a Bloch-type condition on the symbol $g$.

\begin{theorem} \label{TH5}
Assume $1\leq t<\infty$, $0<p<\infty$, $-n-1<q<\infty$, $1< s<\infty$, and $s+q>-1$. Let $g\in H(\B)$. Then
\begin{enumerate}
\item[(1)]  $T_g: B_t(\B)\rightarrow F(p,\,q,\,s)$ is bounded if and only if $g\in \mathcal{B}_{\frac{n+1+q}{p},\,\frac{t-1}{t}}$, in which case
\begin{equation}\label{norm-2}
\|T_g\|_{B_{t}(\B)\rightarrow F(p,\,q,\,s)}
\asymp\|g\|_{\mathcal{B}_{\frac{n+1+q}{p},\,\frac{t-1}{t}}}.
\end{equation}

\item[(2)]
 $T_g: B_t(\B)\rightarrow  F(p,\,q,\,s)$ is compact if and only if $g\in \mathcal{B}^0_{\frac{n+1+q}{p},\,\frac{t-1}{t}}$.
\end{enumerate}
\end{theorem}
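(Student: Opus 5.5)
Since $T_gf(0)=0$ and $\Re(T_gf)=f\,\Re g$, the equivalence \eqref{car-yub} (valid because $s>1$, via Proposition~\ref{pro1}) gives
$$
\|T_gf\|^p_{F(p,q,s)}\asymp \sup_{a\in\B}(1-|a|^2)^{q}\int_{D(a,R)}|f(w)|^p|\Re g(w)|^p\,dv(w).
$$
Set $d\mu_g(w)=|\Re g(w)|^p\,dv(w)$. Then boundedness of $T_g:B_t(\B)\to F(p,q,s)$ is exactly boundedness of the embedding $i:B_t(\B)\to T^{p}_{-q,0}(\B,\mu_g)$. This is the tent space with exponent $p$, $s$-parameter $-q$ and log-parameter $0$, and the norms are comparable. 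Applying Theorem~\ref{TH2} with $(p,q,s,t)$ replaced by $(t,p,-q,0)$, we get
$$
\|T_g\|^p\asymp \sup_{a\in\B}\frac{\int_{D(a,R)}|\Re g(w)|^p\,dv(w)}{(1-|a|^2)^{-q}\left(\log\frac{2}{1-|a|^2}\right)^{-p+p/t}}.
$$
This reduces (1) to showing that this quantity is comparable to $\|g\|^p_{\mathcal{B}_{\frac{n+1+q}{p},\frac{t-1}{t}}}$.

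For that comparison, we use that on $D(a,R)$ the quantities $1-|w|^2$, $1-|a|^2$ and $\log\frac{2}{1-|w|^2}$, $\log\frac{2}{1-|a|^2}$ are comparable; this follows from \eqref{Bergman-1}. The upper bound is then immediate: bound $|\Re g(w)|^p$ pointwise by $\|g\|^p(1-|w|^2)^{-(n+1+q)}(\log\frac{2}{1-|w|^2})^{-p(t-1)/t}$ and use $v(D(a,R))\asymp(1-|a|^2)^{n+1}$. For the lower bound, apply the subharmonic mean-value inequality to $|\Re g|^p$ on $D(a,R)$:
$$
|\Re g(a)|^p\lesssim (1-|a|^2)^{-(n+1)}\int_{D(a,R)}|\Re g|^p\,dv.
$$
Multiplying by $(1-|a|^2)^{n+1+q}(\log\frac{2}{1-|a|^2})^{p(t-1)/t}$ recovers the Bloch-type norm. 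Together these give \eqref{norm-2}.

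For (2), we use the same reduction together with Theorem~\ref{TH3}. The one point needing care is the definition of compactness. Compactness of $T_g$ means that $\|T_gf_k\|_{F(p,q,s)}\to0$ for every bounded sequence $f_k$ in $B_t$ converging to $0$ locally uniformly. By the norm equivalence above, this coincides with the compactness notion for $i:B_t\to T^p_{-q,0}(\B,\mu_g)$ used in Theorem~\ref{TH3}. Theorem~\ref{TH3} then says that compactness is equivalent to the vanishing version of the ball condition as $|a|\to1$. The same two-sided pointwise/mean-value comparison, now taken as $|a|\to1$, shows that the vanishing ball condition is equivalent to $g\in\mathcal{B}^0_{\frac{n+1+q}{p},\frac{t-1}{t}}$.

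The main obstacle is justifying the passage from the $F(p,q,s)$ norm to the Bergman-ball form \eqref{car-yub}. That step needs $s>1$ through Proposition~\ref{pro1}, and it is also what makes the implicit constants in the reduction uniform, which the compactness argument relies on. A secondary issue is that the comparisons of logarithms hold only for $|a|$ near $1$. For $|a|$ in a compact set, both sides are controlled by $\sup_{|w|\le r_0}|\Re g|$ and the local integrals, and these are comparable by the mean-value inequality and the maximum principle.
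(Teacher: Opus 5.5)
Your proposal is correct and follows the paper's proof. Both use the reduction via \eqref{car-yub} to the embedding $B_t(\B)\to T^p_{-q,0}(\B,\mu_g)$, Theorem~\ref{TH2} (resp.\ Theorem~\ref{TH3}) for boundedness (resp.\ compactness), and the mean-value inequality on $D(a,R)$ for the lower bound. The only cosmetic difference is in sufficiency: the paper first bounds $|\Re g|$ pointwise and applies Theorem~\ref{TH2} to $dv$ with parameters $(n+1,\,p(t-1)/t)$, whereas you apply it to $\mu_g$ and bound the ball quantity directly, which is equivalent. Your caveat about small $|a|$ is unnecessary: $\log\frac{2}{1-|w|^2}$ and $\log\frac{2}{1-|a|^2}$ are both at least $\log 2$, so they are comparable uniformly for $w\in D(a,R)$ and all $a\in\B$.
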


\begin{proof}
For $g, f\in H(\B)$, it is easy to check that $T_gf(0)=0$ and
$$
\Re (T_g f)(z)=f(z) \Re g(z).
$$
Using the norm estimate \eqref{car-yub} for the space $F(p,\,q,\,s)$, we obtain, for any $f\in B_t(\B)$ and fixed $R>0$,
\begin{equation}\label{norm-1}
\|T_g f\|^p_{F(p,\,q,\,s)}\asymp \sup_{a\in \B}\frac{\int_{D(a, R)}|f(z)|^p |\Re g(z)|^p \, dv(z)}{(1-|a|^2)^{-q}}.
\end{equation}
Notice that
$$
d\mu_g(z)=|\Re g(z)|^p \, dv(z)
$$
is a positive Borel measure on $\B$, and so
$T_g$ is bounded from $B_t(\B)$ to $F(p,\,q,\,s)$ if and only if the embedding operator
$i: B_t(\B) \rightarrow T^{p}_{-q,\, 0}(\B, \mu_g)$ is bounded.

Suppose that $T_g: B_t(\B) \to F(p,\,q,\,s)$ is bounded. Theorem \ref{TH2} gives
\begin{align*}
\|T_g\|^p_{B_t(\B)\rightarrow F(p,\,q,\,s)}
&\asymp\|i\|^p_{B_t(\B)\rightarrow T^{p}_{-q,\, 0}(\B, \mu_g)}\\
&\asymp\sup_{a\in\B}\frac{\int_{D(a, R)}|\Re g(z)|^p \, dv(z)}{(1-|a|^2)^{-q}\left(\log\frac{2}{1-| a|^2}\right)^{\frac{p(1-t)}{t}}}<\infty.
\end{align*}
Note that
$$
|\Re g(a)|^p(1-| a|^2)^{n+1} \lesssim \int_{D(a, R)}|\Re g(z)|^p \, dv(z),
$$
and hence
$$
\sup_{a\in\B}|\Re g(a)|(1-| a|^2)^{\frac{n+1+q}{p}}\left(\log\frac{2}{1-| a|^2}\right)^{\frac{t-1}{t}} \lesssim\|T_g\|_{B_t(\B)\rightarrow F(p,\,q,\,s)}.
$$

Conversely, suppose that $g\in \mathcal{B}_{\frac{n+1+q}{p},\,\frac{t-1}{t}}$. For any $f\in B_t(\B)$, \eqref{norm-1} implies
\begin{align*}
\|T_g f\| ^p_{F(p,\,q,\,s)}
&\lesssim \|g\|^p_{\mathcal{B}_{\frac{n+1+q}{p},\,\frac{t-1}{t}}}
\sup_{a\in \B}\frac{\int_{D(a, R)}|f(z)|^p \, dv(z)}{(1-|a|^2)^{n+1}\left(\log\frac{2}{1-
|a|^2}\right)^{\frac{p(t-1)}{t}}}.
\end{align*}
By Theorem \ref{TH2}, the embedding operator
$i: B_t(\B)\rightarrow T_{n+1,\,\frac{p(t-1)}{t}}^{p}(\B, v)$ is bounded. Therefore,
\begin{align*}
\|T_g f\|^p_{F(p,\,q,\,s)}
&\lesssim \|g\|^p_{\mathcal{B}_{\frac{n+1+q}{p},\,\frac{t-1}{t}}}\|i\|^p_{B_t(\B)\rightarrow T_{n+1,\,\frac{p(t-1)}{t}}^{p}(\B, v)}\|f\|^p_{B_t}\\
&\lesssim \|g\|^p_{\mathcal{B}_{\frac{n+1+q}{p},\,\frac{t-1}{t}}}\|f\|^p_{B_t},
\end{align*}
and hence \eqref{norm-2} holds.

The compactness assertion in (2) follows similarly from Theorem \ref{TH3} and~\eqref{norm-1}.
\end{proof}

We now estimate the essential norms of the integral operators. Let $X$ and $Y$ be two Banach or quasi-Banach spaces, and let $T:X\to Y$ be a linear operator with operator norm $\|T\|_{X\to Y}$. The essential norm of $T$, denoted by $\|T\|_{e,X\to Y}$, is defined by
$$
\|T\|_{e,X\to Y}=\inf\{\|T-K\|_{X\to Y}: K \textrm{ is compact from } X \textrm{ to } Y\}.
$$
Clearly, $T$ is compact from $X$ to $Y$ if and only if $\|T\|_{e,X\to Y}=0$. When $X\subset Y$, the distance from a function $f\in Y$ to $X$ is defined by
$$
\dist_Y(f,X)=\inf\{\|f-g\|_Y:\ g\in X\}.
$$

\begin{theorem}\label{TH06}
Assume $1\leq t<\infty$, $0<p<\infty$, $-n-1<q<\infty$, $1< s<\infty$, and $s+q>-1$. Let $g\in \mathcal{B}_{\frac{n+1+q}{p},\,\frac{t-1}{t}}$. Then
\begin{align*}
&\|T_g\|_{e,\, B_{t}(\B)\rightarrow F(p,\,q,\,s)}\\
&\asymp \dist_{\mathcal{B}_{\frac{n+1+q}{p},\,\frac{t-1}{t}}}\left(g,\, \mathcal{B}^0_{\frac{n+1+q}{p},\,\frac{t-1}{t}}\right)\\
&\asymp\limsup_{|z|\rightarrow 1}|\Re g(z)|(1-| z|^2)^{\frac{n+1+q}{p}}\left(\log\frac{2}{1-| z|^2}\right)^{\frac{t-1}{t}}.
\end{align*}
\end{theorem}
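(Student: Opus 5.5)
Write $\alpha=\frac{n+1+q}{p}$, $\beta=\frac{t-1}{t}$, and
$$
L(g)=\limsup_{|z|\to1}|\Re g(z)|(1-|z|^2)^{\alpha}\Big(\log\frac{2}{1-|z|^2}\Big)^{\beta}.
$$
The plan is to prove the chain
$$
L(g)\lesssim\|T_g\|_{e}\lesssim \dist_{\mathcal{B}_{\alpha,\beta}}(g,\mathcal{B}^0_{\alpha,\beta})\lesssim L(g).
$$

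\emph{Middle inequality.} This follows from Theorem~\ref{TH5}. Since $g\mapsto T_g$ is linear, for every $h\in\mathcal{B}^0_{\alpha,\beta}$ the operator $T_h$ is compact by Theorem~\ref{TH5}(2). Then \eqref{norm-2} gives
$$
\|T_g\|_e\le\|T_g-T_h\|=\|T_{g-h}\|\lesssim\|g-h\|_{\mathcal{B}_{\alpha,\beta}}.
$$
Taking the infimum over $h$ gives the bound.

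\emph{Last inequality.} I will use dilations $g_r(z)=g(rz)$, which lie in $\mathcal{B}^0_{\alpha,\beta}$ because $\Re g_r$ is bounded on $\B$. Fix $\delta\in(0,1)$ and split $\B$ into $\{|z|\le\delta\}$ and $\{|z|>\delta\}$.
\begin{itemize}
\item On $|z|\le\delta$, $\Re g_r\to\Re g$ uniformly as $r\to1$, so this part of $\|g-g_r\|$ tends to $0$.
\item On $|z|>\delta$, I bound $|\Re g(z)|$ and $|\Re g_r(z)|=r|(\Re g)(rz)|$ separately, each against the weight $(1-|z|^2)^\alpha(\log\frac{2}{1-|z|^2})^\beta$.
\end{itemize}
For the $\Re g_r$ term, use $1-|z|^2\le 1-|rz|^2$ together with monotonicity of the weight. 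If $\beta\ge0$, the map $x\mapsto x^\alpha(\log\frac{2}{x})^\beta$ is increasing for small $x$ since $\alpha>0$. If $t\ge1$ then $\beta\ge0$, so the weight at $z$ is dominated by the weight at $rz$ up to a constant. Thus this term is bounded by $\sup_{|w|>r\delta}|\Re g(w)|(1-|w|^2)^\alpha(\log\frac{2}{1-|w|^2})^\beta$. Letting $r\to1$ and then $\delta\to1$ gives $\dist\lesssim L(g)$.

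\emph{Lower bound $L(g)\lesssim\|T_g\|_e$.} This is the main step. Take $|a_j|\to1$ realizing the limsup, and the normalized test functions
$$
f_{a}(z)=\Big(\log\frac{2}{1-|a|^2}\Big)^{-1/t}\log\frac{2}{1-\langle z,a\rangle}
$$
from the proof of Theorem~\ref{TH3}. These are bounded in $B_t$ and tend to $0$ uniformly on compact sets. For any compact $K:B_t\to F(p,q,s)$ we have $\|Kf_{a_j}\|\to0$, using the paper's definition of compactness via such sequences (or a standard reflexivity/weak-convergence argument if $K$ is merely compact in the usual sense). Hence
$$
\|T_g-K\|\gtrsim\limsup_j\|T_gf_{a_j}\|_{F(p,q,s)}.
$$
By \eqref{norm-1} restricted to the single ball $D(a_j,R)$,
$$
\|T_gf_{a_j}\|^p\gtrsim(1-|a_j|^2)^{q}\int_{D(a_j,R)}|f_{a_j}|^p|\Re g|^p\,dv.
$$
On $D(a_j,R)$ we have $|f_{a_j}|\asymp(\log\frac{2}{1-|a_j|^2})^{1-1/t}$. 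Subharmonicity gives $\int_{D(a,R)}|\Re g|^p\,dv\gtrsim(1-|a|^2)^{n+1}|\Re g(a)|^p$. Combining,
$$
\|T_gf_{a_j}\|\gtrsim|\Re g(a_j)|(1-|a_j|^2)^{\alpha}\Big(\log\frac{2}{1-|a_j|^2}\Big)^{\beta},
$$
and the limsup of the right side is $L(g)$.

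The delicate point is justifying $\|Kf_{a_j}\|\to0$ for an arbitrary compact $K$ in the possibly non-reflexive (or quasi-Banach, when $p<1$) target. I would handle it by the paper's sequential definition of compactness. Failing that, for $t>1$ I would use reflexivity of $B_t$ and weak-null convergence of $f_{a_j}$, and treat $t=1$ (where $\beta=0$) separately via the plain Bloch-type argument.
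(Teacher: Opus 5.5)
Your proposal is correct and is essentially the paper's proof. The lower bound uses the same normalized logarithmic test functions $f_a$, restricts \eqref{norm-1} to $D(a,R)$, applies subharmonicity, and invokes $\lim\|Kf_a\|=0$; the paper simply asserts that last limit, so your caveat about $t=1$ is, if anything, more careful than the source. The upper bound, as in the paper, applies Theorem~\ref{TH5} to the difference of symbols together with the dilations $g_r$.

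The remaining differences are cosmetic. You take the infimum over all $h\in\mathcal{B}^0_{\alpha,\beta}$ rather than only over the dilations $g_r$. You also prove $\dist(g,\mathcal{B}^0_{\alpha,\beta})\lesssim L(g)$ directly, by splitting at $|z|=\delta$ and using the quasi-monotonicity of $x^{\alpha}(\log\frac{2}{x})^{\beta}$, whereas the paper defers this to Tjani. One small slip: $\Re g_r(z)=(\Re g)(rz)$, with no extra factor $r$, but this does not affect the argument.
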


\begin{proof}
Since $g\in \mathcal{B}_{\frac{n+1+q}{p},\,\frac{t-1}{t}}$, by Theorem \ref{TH5}, we know that $T_g: B_t(\B)\rightarrow F(p,\,q,\,s)$ is bounded.
Given $a\in\B$, set
\begin{equation}\label{textfun-1}
f_a(z)=\left(\log\frac{2}{1-|a|^2}\right)^{-1/t}\log\frac{2}{1-\langle z,a \rangle},\qquad z\in\B.
\end{equation}
As in the proof of Theorem \ref{TH2}, we have $f_a\in B_t(\B)$ with $\|f_a\|_{B_t}\lesssim 1$, and $\{f_a\}$ converges to $0$ uniformly on compact subsets of $\B$ as $|a|\rightarrow 1$.
Hence, for each compact operator $K$ from $B_t(\B)$ to $F(p,\,q,\,s)$, there exists some constant $C>0$ such that
\begin{align*}
&\|T_g-K\|^p_{B_{t}(\B)\rightarrow F(p,\,q,\,s)}
\geq C\limsup_{|a|\rightarrow 1}\|(T_g-K) f_a\|^p_{F(p,\,q,\,s)}\\
&\geq C\left(\limsup_{|a|\rightarrow 1} \|T_g (f_a)\|^p_{F(p,\,q,\,s)}-\lim_{|a|\rightarrow 1} \|K(f_a)\|^p_{F(p,\,q,\,s)}\right)\\
&= C\limsup_{|a|\rightarrow 1} \|T_g (f_a)\|^p_{F(p,\,q,\,s)}.
\end{align*}
By \eqref{norm-1},
\begin{align*}
\|T_g (f_a)\|^p_{F(p,\,q,\,s)} &\asymp \sup_{\xi\in \B}\frac{\int_{D(\xi,R)}|f_a(w)|^p |\Re g(w)|^p dv(w)}{(1-|\xi|^2)^{-q}}\\
&\geq \frac{\int_{D(a, R)}|f_a(w)|^p |\Re g(w)|^p dv(w)}{(1-|a|^2)^{-q}}\\
&\geq C(1-|a|^2)^{n+1+q}|f_a(a)|^p|\Re g(a)|^p\\
&=C|\Re g(a)|^p(1-|a|^2)^{n+1+q}\left(\log\frac{2}{1-|a|^2}\right)^{\frac{p(t-1)}{t}}.
\end{align*}
Thus,
$$
\|T_g-K\|_{B_{t}(\B)\rightarrow F(p,\,q,\,s)}
\gtrsim
\limsup_{|a|\rightarrow 1}|\Re g(a)|(1-|a|^2)^{\frac{n+1+q}{p}}
\left(\log\frac{2}{1-|a|^2}\right)^{\frac{t-1}{t}}.
$$

We now prove the reverse inequality. For $r\in (0,1)$, define
$$
g_r(z)=g(rz), \quad z\in \B.
$$
As in the proof of \cite[Theorem 3.9]{Tj06}, we can show that $g_r\in \mathcal{B}^0_{\frac{n+1+q}{p},\,\frac{t-1}{t}}$. Moreover,
\begin{align*}
& \operatorname{dist}_{\mathcal{B}_{\frac{n+1+q}{p},\,\frac{t-1}{t}}}\left(g,\, \mathcal{B}^0_{\frac{n+1+q}{p},\,\frac{t-1}{t}}\right)\\
&\asymp \limsup_{r\rightarrow 1}\|g-g_r\|_{\mathcal{B}_{\frac{n+1+q}{p},\,\frac{t-1}{t}}}\\
&\asymp \limsup_{|a|\rightarrow 1}|\Re g(a)|(1-|a|^2)^{\frac{n+1+q}{p}}\left(\log\frac{2}{1-|a|^2}\right)^{\frac{t-1}{t}}.
\end{align*}
Theorem \ref{TH5} yields that $T_{g_r}: B_t(\B)\rightarrow F(p,\,q,\,s)$ is compact, and
\begin{align*}
\|T_g\|_{e,\,B_{t}(\B)\rightarrow F(p,\,q,\,s)}
&\leq \|T_g-T_{g_r}\|_{B_{t}(\B)\rightarrow F(p,\,q,\,s)}\\
&\asymp \|g-g_r\|_{\mathcal{B}_{\frac{n+1+q}{p},\,\frac{t-1}{t}}}.
\end{align*}
Taking $r\rightarrow 1$, we have
\begin{align*}
\|T_g\|_{e,\,B_{t}(\B)\rightarrow F(p,\,q,\,s)}
&\lesssim \limsup_{r\rightarrow 1}\|g-g_r\|_{\mathcal{B}_{\frac{n+1+q}{p},\,\frac{t-1}{t}}}\\
&\asymp \operatorname{dist}_{\mathcal{B}_{\frac{n+1+q}{p},\,\frac{t-1}{t}}}\left(g,\, \mathcal{B}^0_{\frac{n+1+q}{p},\,\frac{t-1}{t}}\right).
\end{align*}
\end{proof}

Finally, we study the boundedness, compactness, and essential norm of $I_g$ from the Besov space $B_t(\B)$ to $F(p,\,q,\,s)$.

\begin{theorem}\label{thm6}
Let $g\in H(\B)$, $n<t<\infty$, $0<p<\infty$, $-n-1<q<\infty$, $1<s<\infty$, and $s+q>-1$. Then
\begin{enumerate}
\item[(1)] $I_g: B_t(\B)\rightarrow F(p,\,q,\,s)$ is bounded if and only if $g\in \mathcal{H}^\infty_{\frac{n+1+q-p}{p}}$, in which case
\begin{equation}\label{norm-a2}
\|I_g\|_{B_{t}(\B)\rightarrow F(p,\,q,\,s)}
\asymp\|g\|_{\mathcal{H}^\infty_{\frac{n+1+q-p}{p}}}.
\end{equation}

\item[(2)] $I_g: B_t(\B)\rightarrow F(p,\,q,\,s)$ is compact if and only if
$g\in \mathcal{H}^0_{\frac{n+1+q-p}{p}}$.

\item[(3)] If $I_g: B_t(\B)\rightarrow F(p,\,q,\,s)$ is bounded, then
\begin{equation}\label{norm-b4}
\|I_g\|_{e,\, B_{t}(\B)\rightarrow F(p,\,q,\,s)}
\asymp\limsup_{|z|\rightarrow 1}|g(z)|(1-| z|^2)^{\frac{n+1+q-p}{p}}.
\end{equation}
\end{enumerate}
\end{theorem}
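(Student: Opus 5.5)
Since $I_g f(0)=0$ and $\Re(I_g f)=g\,\Re f$, the norm estimate \eqref{car-yub} (valid because $s>1$, via Proposition~\ref{pro1}) gives
$$
\|I_g f\|^p_{F(p,q,s)}\asymp \sup_{a\in\B}(1-|a|^2)^{q}\int_{D(a,R)}|g(z)|^p|\Re f(z)|^p\,dv(z).
$$
So everything reduces to estimates for $|\Re f|$ when $f\in B_t$ with $t>n$. For such $t$, $B_t\subset\mathcal B$ and $\sup_z(1-|z|^2)|\Re f(z)|\lesssim\|f\|_{\mathcal B}\lesssim\|f\|_{B_t}$.

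\emph{Sufficiency in (1).} Assume $g\in\mathcal{H}^\infty_{\frac{n+1+q-p}{p}}$. For $z\in D(a,R)$ we have $1-|z|^2\asymp1-|a|^2$, so
$$
|g(z)|^p|\Re f(z)|^p\lesssim \|g\|^p_{\mathcal H^\infty}\|f\|^p_{B_t}(1-|a|^2)^{-(n+1+q)}.
$$
Integrating over $D(a,R)$ contributes a factor $(1-|a|^2)^{n+1}$, and combined with the weight $(1-|a|^2)^q$ this yields $\|I_gf\|_{F(p,q,s)}\lesssim\|g\|_{\mathcal H^\infty}\|f\|_{B_t}$.

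\emph{Necessity.} I would use the test functions $f_a(z)=\langle z,a\rangle$-type functions normalized in $B_t$, or better
$$
f_a(z)=\frac{(1-|a|^2)^{b}}{(1-\langle z,a\rangle)^{b}},\qquad b>0 \text{ small (e.g. } b>0 \text{ arbitrary)}.
$$
By \cite[Theorem 1.12]{Zh04}, $\|f_a\|_{B_t}\lesssim1$ provided $N t-(n+1)-(N+b)t+(n+1)<0$, i.e.\ $b>0$. Also $|\Re f_a(z)|\asymp|a|^2/(1-|a|^2)$ on $D(a,R)$ for $|a|\ge 1/2$. Subharmonicity of $|g|^p$ gives $\int_{D(a,R)}|g|^p\,dv\gtrsim(1-|a|^2)^{n+1}|g(a)|^p$. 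Hence
$$
\|I_g\|^p\gtrsim |g(a)|^p(1-|a|^2)^{n+1+q-p},
$$
with the region $|a|<1/2$ handled by the maximum principle. This proves (1).

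\emph{Compactness (2).} Sufficiency: split the integral into $|z|\le r$ and $|z|>r$. On the first region use uniform convergence of $\Re f_j$ on compacta; on the second use $\sup_{|z|>r}|g(z)|(1-|z|^2)^{\frac{n+1+q-p}{p}}\to0$. Necessity: the $f_a$ above tend to $0$ uniformly on compacta as $|a|\to1$, since $(1-|a|^2)^b\to0$ and $|1-\langle z,a\rangle|$ stays bounded below. Compactness then forces $|g(a)|(1-|a|^2)^{\frac{n+1+q-p}{p}}\to0$.

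\emph{Essential norm (3).} The lower bound follows by the same argument as in Theorem~\ref{TH06}, using $\|(I_g-K)f_a\|\ge\|I_gf_a\|-\|Kf_a\|$ with $\|Kf_a\|\to0$. For the upper bound I expect the main obstacle. One cannot approximate $I_g$ by $I_{g_r}$, since $I_g-I_{g_r}=I_{g-g_r}$ is controlled by $\sup|g-g_r|(1-|z|^2)^{\cdots}$, and that need not be small near the boundary in the Herz norm. Instead I would use $K_r f=I_g(f-f_r)$-type truncations, or more directly $K_rf=I_{g\chi}$. The concrete choice I would try first is $K_r=I_g\circ C_r$, where $C_rf(z)=f(rz)$. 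The composition $C_r$ is compact on $B_t$, so $K_r$ is compact. Then
$$
\|I_g-K_r\|\le\sup_{\|f\|\le1}\|I_g(f-f_r)\|,
$$
and one splits $\B$ into $|z|\le\rho$, where $\Re(f-f_r)\to0$ uniformly as $r\to1$ over the unit ball of $B_t$ (by the Bloch-type Cauchy estimates), and $|z|>\rho$, where $\sup_{|z|>\rho}|g|(1-|z|^2)^{\frac{n+1+q-p}{p}}$ appears multiplied by the uniformly bounded Bloch norm of $f-f_r$. Letting $r\to1$ and then $\rho\to1$ yields the $\limsup$ upper bound.
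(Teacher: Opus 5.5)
Your proof is correct, but it takes a more elementary route than the paper. For (1) and (2), the paper uses the fact that $f\in B_t$ iff $\Re f\in A^t_{t-n-1}$ (this is where $t>n$ enters). It then reads boundedness and compactness off the Bergman-to-tent embedding Theorem~\ref{TH4}. You bypass both ingredients. For sufficiency you use only the Bloch bound $(1-|z|^2)|\Re f(z)|\lesssim\|f\|_{B_t}$. For necessity you use test functions $(1-|a|^2)^b(1-\langle z,a\rangle)^{-b}$ built directly in $B_t$. This gains you two things. The hypothesis $t>n$ plays no role in your argument. You also sidestep the paper's tacit identification of $\{\Re f: f\in B_t\}$ with all of $A^t_{t-n-1}$, even though that range consists only of functions vanishing at $0$. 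The paper's route, in exchange, presents the result as an application of its tent-embedding machinery. One small repair: the estimate $|\Re f_a|\asymp|a|^2/(1-|a|^2)$ on all of $D(a,R)$ needs $|a|$ close to $1$, depending on $R$. It is cleaner to apply subharmonicity to $|g\,\Re f_a|^p$ at the center $a$, as the paper does; this gives the bound for all $|a|\ge 1/2$.

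For the upper bound in (3), your compact approximants $I_g\circ C_r$, with $C_rf(z)=f(rz)$, do work. However, your reason for rejecting the paper's approximants $I_{g_r}$ is wrong. Write $\alpha=\frac{n+1+q-p}{p}$; one does not need $\|g-g_r\|_{\mathcal H^\infty_\alpha}\to0$. For $\alpha>0$, the same monotonicity $(1-|z|^2)^\alpha\le(1-|rz|^2)^\alpha$ that you use for $f_r$ gives $\limsup_{r\to1}\|g-g_r\|_{\mathcal H^\infty_\alpha}\le 2\limsup_{|z|\to1}(1-|z|^2)^\alpha|g(z)|$. Combined with the compactness of $I_{g_r}$ from part (2), this is exactly the paper's argument. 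Your choice does have a modest advantage: it treats $\alpha=0$ uniformly. There $g_r\notin\mathcal H^0_0=\{0\}$ unless $g\equiv0$, so the paper's argument must fall back on the trivial bound $\|I_g\|_{e}\le\|I_g\|\asymp\sup|g|=\limsup_{|z|\to1}|g(z)|$.
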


\begin{proof}
For $g,f\in H(\B)$, it is easy to check that $I_gf(0)=0$ and
$$
\Re (I_g f)(z)=g(z)\Re f(z).
$$
For every $f\in B_t(\B)$, \eqref{car-yub} implies
\begin{equation}\label{norm-04}
\|I_g f\|^p_{F(p,\,q,\,s)}\asymp \sup_{a\in \B}\frac{\int_{D(a, R)}|g(z)|^p |\Re f(z)|^p \, dv(z)}{(1-|a|^2)^{-q}}
\end{equation}
for some (or all) $R>0$.

Note that, when $n<t<\infty$, it follows from \cite[page 231, Exercise 6.8]{Zh04} that
$f\in B_t(\B)$ if and only if $\Re f \in A^t_{t-n-1}(\B)$, and
$$
\|f\|_{B_t}\asymp |f(0)|+\|\Re f\|_{A^t_{t-n-1}}.
$$
Thus, $I_g$ is bounded from $B_t(\B)$ to $F(p,\,q,\,s)$ if and only if the embedding operator
$$
i: A^t_{t-n-1}(\B) \rightarrow T^{p}_{-q,\,0}(\B, \nu_g)
$$
is bounded, where
$$
d\nu_g(z)=|g(z)|^p\,dv(z).
$$
By Theorem \ref{TH4}, this holds if and only if
\begin{align*}
\|I_g\|^p_{B_t(\B)\rightarrow F(p,\,q,\,s)}
&\asymp\|i\|^p_{A^t_{t-n-1}(\B)\rightarrow T^{p}_{-q,\,0}(\B, \nu_g)}\\
&\asymp\sup_{a\in\B}\frac{\int_{D(a, R)}|g(z)|^p \, dv(z)}{(1-|a|^2)^{p-q}}<\infty.
\end{align*}
Thus,
$$
\sup_{a\in\B}|g(a)|(1-|a|^2)^{\frac{n+1+q-p}{p}}
\lesssim \|I_g\|_{B_t(\B)\rightarrow F(p,\,q,\,s)}<\infty.
$$

Conversely, suppose that $g\in \mathcal{H}^\infty_{\frac{n+1+q-p}{p}}$. By \eqref{norm-04},
\begin{align*}
\|I_g f\| ^p_{F(p,\,q,\,s)}
&\lesssim \|g\|^p_{\mathcal{H}^\infty_{\frac{n+1+q-p}{p}}}
 \sup_{a\in \B}\frac{\int_{D(a, R)}|\Re f(z)|^p \, dv(z)}{(1-| a|^2)^{n+1-p}}
\end{align*}
for any $f\in B_t(\B)$. Applying Theorem \ref{TH4} again, we see that the embedding
$$
i: A^t_{t-n-1}(\B)\rightarrow T_{n+1-p,\,0}^{p}(v)
$$
is bounded. Therefore,
\begin{align*}
\|I_g f\|^p_{F(p,\,q,\,s)}
&\lesssim \|g\|^p_{\mathcal{H}^\infty_{\frac{n+1+q-p}{p}}}
\|i\|^p_{A^t_{t-n-1}(\B)\rightarrow T_{n+1-p,\,0}^{p}(v)}
\|\Re f\|^p_{A^t_{t-n-1}}\\
&\lesssim \|g\|^p_{\mathcal{H}^\infty_{\frac{n+1+q-p}{p}}}\|f\|^p_{B_t}.
\end{align*}
Therefore, \eqref{norm-a2} follows.

Similarly, Theorem \ref{TH4} and \eqref{norm-04} imply that
$I_g: B_t(\B)\rightarrow F(p,\,q,\,s)$ is compact if and only if
$g\in \mathcal{H}^0_{\frac{n+1+q-p}{p}}$.
It remains to prove \eqref{norm-b4}.

For $a\in \B$, define
$$
f_a(z)=\frac{(1-|a|^2)^{\frac{n+1}{t}}}{(1-\langle z, a\rangle)^{\frac{n+1}{t}}},\quad z\in\B.
$$
Then
$$
\Re f_a(z)=\frac{n+1}{t}\frac{(1-|a|^2)^{\frac{n+1}{t}}}{(1-\langle z, a\rangle)^{\frac{n+1}{t}+1}}\langle z, a\rangle,
$$
so
$$
\|f_a\|_{B_t}\asymp 1+(1-|a|^2)^{\frac{n+1}{t}}
\left(\int_{\B}\frac{(1-|z|^2)^{t-n-1}}{|1-\langle z, a\rangle|^{n+1+t}}\,dv(z)\right)^{1/t}\lesssim 1,
$$
and $\{f_a\}$ converges to $0$ uniformly on compact subsets of $\B$ as $|a|\rightarrow 1$.

For each compact operator $K$ from $B_{t}(\B)$ to $F(p,\,q,\,s)$, we have
\begin{align*}
&\|I_g-K\|^p_{B_{t}(\B)\rightarrow F(p,\,q,\,s)}
\geq C\limsup_{|a|\rightarrow 1}\|(I_g-K) f_a\|^p_{F(p,\,q,\,s)}\\
&\geq C\left(\limsup_{|a|\rightarrow 1}\|I_g(f_a)\|^p_{F(p,\,q,\,s)}
-\lim_{|a|\rightarrow 1}\|K(f_a)\|^p_{F(p,\,q,\,s)}\right)\\
&= C\limsup_{|a|\rightarrow 1}\|I_g(f_a)\|^p_{F(p,\,q,\,s)}.
\end{align*}
By \eqref{norm-04}, for each fixed $a$,
\begin{align*}
\|I_g (f_a)\|^p_{F(p,\,q,\,s)}
&\asymp \sup_{\xi\in \B}\frac{\int_{D(\xi,R)}|\Re f_a(w)|^p |g(w)|^p \, dv(w)}{(1-|\xi|^2)^{-q}}\\
&\geq \frac{\int_{D(a,R)}|\Re f_a(w)|^p |g(w)|^p \, dv(w)}{(1-|a|^2)^{-q}}\\
&\gtrsim |g(a)|^p(1-|a|^2)^{n+1+q-p}.
\end{align*}
Thus,
$$
	\|I_g-K\|_{B_{t}(\B)\rightarrow F(p,\,q,\,s)}
	\gtrsim
	\limsup_{|a|\rightarrow 1}|g(a)|(1-|a|^2)^{\frac{n+1+q-p}{p}}.
$$

For the reverse inequality, set $g_r(z)=g(rz)$ for $z\in\B$. Then $g_r\in \mathcal{H}^0_{\frac{n+1+q-p}{p}}$, and
\begin{align*}
& \operatorname{dist}_{\mathcal{H}^\infty_{\frac{n+1+q-p}{p}}}
\left(g,\, \mathcal{H}^0_{\frac{n+1+q-p}{p}}\right)
\asymp \limsup_{r\rightarrow 1}\|g-g_r\|_{\mathcal{H}^\infty_{\frac{n+1+q-p}{p}}}\\
&\asymp \limsup_{|a|\rightarrow 1}|g(a)|(1-|a|^2)^{\frac{n+1+q-p}{p}}.
\end{align*}
Since $I_{g_r}: B_t(\B)\rightarrow F(p,\,q,\,s)$ is compact, we get
\begin{align*}
\|I_g\|_{e,\,B_{t}(\B)\rightarrow F(p,\,q,\,s)}
&\leq \|I_g-I_{g_r}\|_{B_{t}(\B)\rightarrow F(p,\,q,\,s)}\\
&\asymp \|g-g_r\|_{\mathcal{H}^\infty_{\frac{n+1+q-p}{p}}}.
\end{align*}
Taking $r\rightarrow 1$, we have
\begin{align*}
\|I_g\|_{e,\,B_{t}(\B)\rightarrow F(p,\,q,\,s)}
&\lesssim \limsup_{r\rightarrow 1}\|g-g_r\|_{\mathcal{H}^\infty_{\frac{n+1+q-p}{p}}}\\
&\asymp \operatorname{dist}_{\mathcal{H}^\infty_{\frac{n+1+q-p}{p}}}
\left(g,\, \mathcal{H}^0_{\frac{n+1+q-p}{p}}\right).
\end{align*}
\end{proof}

\begin{remark}
From Theorems \ref{TH5}--\ref{thm6}, we see that the boundedness, compactness, and essential norms of the integral operators $T_g$ and $I_g$ from the Besov space $B_t(\B)$ to $F(p,q,s)$ are independent of the parameter $s$.
\end{remark}

\section*{Acknowledgments}

The third author thanks Huzhou Normal University for its hospitality during a visit in January 2026 when part of this research was carried out.

\end{document}